\newtheorem{thm}{Theorem}[section]
\newtheorem{lemma}[thm]{Lemma}
\newtheorem{proposition}[thm]{Proposition}
\theoremstyle{definition}
\newtheorem*{definition*}{Definition}
\theoremstyle{remark}
\numberwithin{equation}{section}
\newcommand{\eps}{\varepsilon}
\definecolor{celadon}{rgb}{0.67, 0.88, 0.69}
\colorlet{kollane}{LightGoldenrod1}
\colorlet{roheline}{celadon}
\colorlet{sinine}{LightSkyBlue2}
\colorlet{lilla}{violet}
\colorlet{hall}{lightgray}
\newcommand{\N}{\mathbb{N}}
\newcommand{\ws}{w^\ast}
\newcommand{\Xs}{X^\ast}
\newcommand{\xs}{x^\ast}
\newcommand{\zs}{z^\ast}
\newcommand{\Zs}{Z^\ast}
\newcommand{\CWO}{\textit{CWO}\xspace}
\newcommand{\CWOS}{\mbox{\textit{CWO}-$S$\xspace}}
\newcommand{\CWOB}{\mbox{\textit{CWO}-$B$\xspace}}
\DeclareMathOperator{\re}{Re}
\DeclareMathOperator{\diam}{diam}
\begin{document}

\title[Convex combinations of relatively weakly open subsets]%
{Norm-one points in convex combinations of relatively weakly open subsets of the unit ball in the spaces $L_1(\mu,X)$}%
%Norm-One Points in Convex Combinations of Relatively Weakly Open Subsets of the Unit Ball in $L_1(\mu, X)$ %vector-valued $L_1$-spaces
\author{Rainis Haller}%
\author{Paavo Kuuseok}%
\author{M\"{a}rt P\~{o}ldvere}%
\address[R.~Haller and M.~P\~{o}ldvere]%
{Institute of Mathematics and Statistics, University of Tartu, Narva mnt 18, 51009, Tartu, Estonia}%%
\email{rainis.haller@ut.ee, mart.poldvere@ut.ee}%

\address[P. Kuuseok]%
{Kuressaare College, School of Engineering, Tallinn University of Technology,
Tallinna 19, 93811, Kuressaare, Estonia}%%
\email{paavo.kuuseok@taltech.ee}%

\thanks{Research supported by the Estonian Research Council grant PRG1901}%
\subjclass{Primary 46B20, 46E40; Secondary 46G10}%
\keywords{Convex combinations of relatively weakly open sets; Weak topology; Stable sets; Weakly uniformly rotund}%

%\date{}%
%\dedicatory{}%
%\commby{}%
% ----------------------------------------------------------------
\begin{abstract}
In a paper published in 2020 in \textsl{Studia Mathematica,}
Abrahamsen et al. proved that in the real space $L_1(\mu)$,
where $\mu$ is a non-zero $\sigma$-finite (countably additive non-negative) measure,
norm-one elements in finite convex combinations of relatively weakly open subsets of the unit ball
are interior points of these convex combinations in the relative weak topology.
In this paper that result is generalised by proving that
the same is true in the (real or complex) Lebesgue--Bochner spaces $L_1(\mu,X)$
where $X$ is a weakly uniformly rotund Banach space.
\end{abstract}

% \begin{abstract}
% In a 2020 paper in \textit{Studia Mathematica}, Abrahamsen et al.\ proved that in the real space $L_1(\mu)$, where $\mu$ is a nonzero $\sigma$-finite measure, every norm-one element in a convex combination of relatively weakly open subsets of the unit ball is an interior point of that combination. We generalise this result by proving that the same holds in the (real or complex) Lebesgue–-Bochner spaces $L_1(\mu, X)$ for every weakly uniformly rotund Banach space $X$.
% \end{abstract}

\maketitle

% ----------------------------------------------------------------

%%%%%%%%%%%%%%%%%%%%%%%%%%%%%%%%%%%%%%%%%%%%%%%%%%%%%%%%%%%%%%%%%%%%%%
\section{Introduction}
%%%%%%%%%%%%%%%%%%%%%%%%%%%%%%%%%%%%%%%%%%%%%%%%%%%%%%%%%%%%%%%%%%%%%%

For a Banach space $X$, we denote by $S_X$, $B_X$, and $B_X^\circ$, respectively,
its unit sphere, closed unit ball, and open unit ball.
The topological dual of $X$ is denoted by $X^\ast$.
%Throughout the paper,
Whenever dealing with the space $L_1(\mu,X)$ where $\mu$ is a non-negative measure,
we assume the measure $\mu$ to be complete.

\medskip
In the memoir \cite{GGMS} by Ghoussoub, Godefroy, Maurey, and Schachermayer, it was proven that
\emph{in the positive face $\mathcal{F}:=\{f\in L_1[0,1]\colon f\geq0,\,\|f\|=1\}$
of the closed unit ball of (the real space) $L_1[0,1]$,
every (finite) convex combination of relatively weakly open subsets (in particular, slices)
is still weakly open} \cite[page 48, Remark IV.5]{GGMS}.
This property of $\mathcal{F}$ was called ``remarkable'' in \cite{GGMS}, as, in general, a convex combination of relatively weakly open subsets need not be relatively weakly open.
(For instance, in strictly convex Banach spaces,
no non-trivial convex combination of two disjoint subsets of the closed unit ball intersects the unit sphere;
therefore such combinations cannot be relatively weakly open in the closed unit ball.)
Another example of such a phenomenon was discovered by Abrahamsen and Lima
in \cite[Theorems 2.3 and 2.4]{AL}
where it was proven that \emph{in the (real or complex) spaces C(K), where $K$ is a scattered compact Hausdorff space,
and $c_0$, convex combinations of slices of the unit ball are relatively weakly open.}
On the other hand, in \cite[Theorem 3.1 and Remark 3.1]{HKP} it was proven that
whenever the space $C_0(L)$, where $L$ is a locally compact Hausdorff space,
has this property, then the space $L$ is scattered. 
Thus, in fact, the space $C_0(L)$ has this property if and only if the space~$L$ is scattered
(see \cite[Theorem~3.1]{ABHLP}).
An extension of this result for the spaces $C_0(L,\mathcal A)$, where $\mathcal A$ is a compact $C^\ast$-algebra, was obtained by Becerra Guerrero and Fernández-Polo in \cite[Theorem~4.3]{GP}.
An ultimate version of this result in the context of $L_1$-predual spaces
was proven by López-Pérez and Medina in \cite[Theorem~2.3 and Corollary 2.4]{L-PM}.
The results in \cite{L-PM} involve the notion of \emph{stability.}
A convex subset $C$ of a topological vector space is said to be \emph{stable}
%if the mapping $C\times C\ni(x,y)\mapsto \frac{x+y}{2}\in C$ is open 
%with respect to the respective relative topologies.
%Equivalently, the set $C$ is stable if and only 
if every convex combination of relatively open subsets in $C$
is open in the relative topology of $C$ (see, e.g., \cite[Proposition 1.1]{CP}, 
for equivalent reformulations for the notion of stability).
So, the above-mentioned result in \cite{GGMS} says that the positive face $\mathcal{F}$ is stable
with respect to its relative weak topology.
We refer to \cite[Introduction]{L-PM} for an overview of various results in Banach spaces that involve the notion of stability.

In \cite{ABHLP}, the following notions were defined and a thorough study of these notions was taken on.
\begin{definition*}[see {\cite[Definition 1.1]{ABHLP}}]%\label{def: CWO-s}
A Banach space $X$ is said to have
\begin{itemize}
\item%[\textup{(a)}]
\emph{property \CWO} if, for every convex combination $C$ of relatively weakly open subsets of $B_X$,
the set $C$ is open in the relative weak topology of $B_X$;
\item%[\textup{(b)}]
\emph{property \CWOS} if, for every convex combination $C$ of relatively weakly open subsets of $B_X$,
every $x \in C\cap S_X$ is an interior point of~$C$ in the relative weak topology of $B_X$;
\item%[\textup{(c)}]
\emph{property \CWOB} if, for every convex combination $C$ of relatively weakly open subsets of $B_X$,
every $x \in C\cap B_X^\circ$ is an interior point of~$C$ in the relative weak topology of $B_X$.
\end{itemize}
\end{definition*}
\noindent%
Thus, \emph{a Banach space has property \CWO{} if and only if it has both properties \CWOS{} and \CWOB{}}.
Note that property \CWO{} for $X$ means precisely that $B_X$ is stable with respect to its relative weak topology.
In \cite{ABHLP}, a geometric property for Banach spaces $X$---called property $(co)$ in that paper---guaranteeing that,
for finite dimensional $X$ and scattered $L$, the space space $C_0(L,X)$ has property \CWO{}, was singled out
(see \cite[Definition 2.1 and Theorem 2.5]{ABHLP}).
For finite-dimensional $X$, property $(co)$ is formally stronger than property \CWO{} \cite[Proposition 2.2]{ABHLP}.
In \cite[Proposition 3.7]{L-PM}, it was proven that $X$ has property $(co)$ if and only if its unit ball $B_X$ is stable with respect to its relative norm topology.
It follows that, for finite-dimensional $X$, properties $(co)$ and \CWO{} are equivalent
(this was proven independently by Kadets in \cite{Kadets_2019}),
and thus, for scattered $L$, the space $C_0(L,X)$ has property \CWO{} 
if and only if $X$ has property \CWO{}.

In \cite[Theorem~5.5]{ABHLP}, it was proven that 
\emph{the real space $L_1(\mu)$,
where $\mu$ is a non-zero $\sigma$-finite (countably additive non-negative) measure,
has property \CWOS}.
(Note that no $1$-sum of two infinite-dimensional Banach spaces can have property \CWO{}  \cite[Proposition 2.1]{HKP}.)
The objective of the present paper is to generalise this result by proving the following theorem.
Recall that a Banach space $X$ is said to be \emph{weakly uniformly rotund} (in brief, \emph{$w$UR})
if, for every $\xs\in\Xs$ and every $\eps>0$, there is a $\delta>0$ such that, whenever $x,y\in S_X$,
\[
\bigl\|\tfrac{1}{2}(x+y)\bigr\|>1-\delta
\quad\Longrightarrow\quad
|\xs(x-y)|<\eps.
\]

\begin{thm}\label{thm: L_1(mu,X) has property CWO-S if X is wUR}
Let $\mu$ be a non-zero $\sigma$-finite (countably additive non-negative) measure
on a $\sigma$-algebra $\Sigma$ of subsets of a set $\Omega$,
and let $X$ be a $w$UR Banach space.
Then the space $L_1(\mu,X)$ has property \CWOS.
\end{thm}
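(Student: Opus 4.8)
The plan is to fix, once and for all, a representation $f=\sum_{i=1}^{n}\lambda_i f_i$ with $f_i\in W_i$, $\lambda_i>0$, $\sum_i\lambda_i=1$ witnessing $f\in C:=\sum_i\lambda_i W_i$, and to build an explicit basic weak neighbourhood $V$ of $f$ in $B:=B_{L_1(\mu,X)}$ so that every $g\in V$ admits a decomposition $g=\sum_i\lambda_i g_i$ with $g_i\in W_i$; this is exactly what \CWOS{} demands. First I would read off the fibrewise geometry of the $f_i$. From $1=\norm f=\sum_i\lambda_i\norm{f_i}$ and $\norm{f_i}\le1$ I get $\norm{f_i}=1$ for all $i$, and comparing the integrands of $\norm{\sum_i\lambda_i f_i}$ and $\sum_i\lambda_i\norm{f_i}$ forces the fibrewise triangle equality $\norm{\sum_i\lambda_i f_i(\omega)}=\sum_i\lambda_i\norm{f_i(\omega)}$ for a.e.\ $\omega$. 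This is where the hypothesis enters: a $w$UR space is strictly convex (if $\norm{\tfrac12(x+y)}=1$ for $x,y\in S_X$, then $w$UR gives $\xs(x-y)=0$ for every $\xs\in\Xs$, hence $x=y$), and strict convexity upgrades triangle equality to alignment. Indeed, a functional norming $f(\omega)$ must norm each $f_i(\omega)$, which forces $f_i(\omega)=\alpha_i(\omega)f(\omega)$ a.e.\ on $\{f\neq0\}$ with measurable $0\le\alpha_i\le1/\lambda_i$ and $\sum_i\lambda_i\alpha_i=1$, while $f_i=0$ a.e.\ on $\{f=0\}$.

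Next I would fix the test functionals defining $V$, using that $(L_1(\mu,X))^\ast=L_\infty^{\ws}(\mu,\Xs)$ acts by integration against $\ws$-measurable essentially bounded densities. Choosing a Hahn--Banach functional $\xs_0$ norming $f$, its density $\Phi_0$ (which I may truncate so that it is supported on $\{f\neq0\}$) automatically satisfies $\re\langle\Phi_0(\omega),f(\omega)\rangle=\norm{f(\omega)}$ and $\norm{\Phi_0(\omega)}\le1$ a.e., so $s(\omega):=\re\langle\Phi_0(\omega),g(\omega)\rangle\le\norm{g(\omega)}$ pointwise. Requiring $\abs{\xs_0(g-f)}<\eps_0$ in $V$ forces $\int_\Omega s>1-\eps_0$; together with $\int_\Omega\norm g\le1$ this yields the two smallness estimates $\int_{\{f=0\}}\norm g\le\eps_0$ and $\int_{\{f\neq0\}}(\norm g-s)\le\eps_0$. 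In words, sitting in a thin weak slice around the norm-one point $f$ forces $\norm g\approx1$ and concentrates almost all the mass of $g$ onto $\{f\neq0\}$.

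With these in hand I would decompose by the proportional rule $g_i:=\alpha_i g$ on $\{f\neq0\}$ and $g_i:=g$ on $\{f=0\}$, so that $\sum_i\lambda_i g_i=g$ identically. Weak proximity of $g_i$ to $f_i$ is the easy requirement: if $\set{h\in B:\abs{\langle\Phi_{i,k},h-f_i\rangle}<\eta_i}\subseteq W_i$ is a basic neighbourhood, then $\langle\Phi_{i,k},g_i-f_i\rangle$ splits as $\langle\alpha_i\Phi_{i,k}\mathbf 1_{\{f\neq0\}},g-f\rangle+\langle\Phi_{i,k}\mathbf 1_{\{f=0\}},g-f\rangle$, and since $\alpha_i$ is bounded these are pairings of $g-f$ against genuine elements of $L_\infty^{\ws}(\mu,\Xs)$; adjoining the finitely many such densities to $V$ makes them small, so $g_i\in W_i$ as soon as $g_i\in B$.

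The genuinely delicate point---and the step I expect to be the \emph{main obstacle}---is the ball constraint $\norm{g_i}\le1$, which is where the non-reflexivity of $L_1$ bites, since the norm of $g$ on $\{f=0\}$ is invisible to finitely many weak conditions. Here I would compute $\norm{g_i}-1=\int_{\{f\neq0\}}\alpha_i(\norm g-\norm f)+\int_{\{f=0\}}\norm g$ and, writing $\norm g-\norm f=(\norm g-s)+\re\langle\Phi_0,g-f\rangle$ on $\{f\neq0\}$, bound the excess by $\eps_0/\lambda_i$ (from $\int_{\{f\neq0\}}(\norm g-s)\le\eps_0$), a term made small by also adjoining $\alpha_i\Phi_0\mathbf 1_{\{f\neq0\}}$ to $V$, and $\eps_0$ (from the $\{f=0\}$ estimate); thus every $(\norm{g_i}-1)_+$ is as small as desired, while $\sum_i\lambda_i\norm{g_i}=\int_\Omega\norm g\le1$ keeps the weighted average below $1$. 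To finish I would rescale each over-full piece to the sphere and redistribute the resulting norm-small (hence weakly small) deficit among the pieces with $\norm{g_j}<1$---which have enough total room precisely because the weighted average does not exceed $1$---thereby restoring $\sum_i\lambda_i g_i=g$, placing every piece in $B$, and perturbing the weak estimates only slightly, so that all $g_i\in W_i$. Then $V\cap B\subseteq C$, the point $f$ is interior to $C$, and \CWOS{} follows. The whole difficulty is reconciling the ball constraint, the exact convex identity $\sum_i\lambda_i g_i=g$, and weak proximity simultaneously; strict convexity supplies the alignment, and the norming-functional localisation tames the otherwise uncontrolled mass on $\{f=0\}$.
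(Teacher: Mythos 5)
Your proposal is correct in outline, but it takes a genuinely different route from the paper, and it ostensibly proves more than the paper does. The paper first reduces to finite $\mu$ (Proposition~1.2) and to common supports of $x$ and $y$ (Lemma~2.1), identifies $L_1(\mu,X)^\ast$ with $L_\infty(\mu,\Xs)$ via H\'ajek's theorem that duals of $w$UR spaces have the Radon--Nikod\'ym property, partitions $\Omega$ into finitely many cells on which $z$ has small oscillation and is almost normed (Lemma~2.2), decomposes $w$ cell by cell using the ratios $\alpha_i/\gamma_i$ of \emph{integrated} norms, and needs the quantitative $w$UR Lemma~2.3 to show the normalised cell averages of $x$ and $z$ are nearly aligned; the norm defect is then repaired by the $\kappa/\rho$ correction. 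You instead upgrade the a.e.\ triangle equality to \emph{exact pointwise alignment} $f_i=\alpha_i f$ (strict convexity plus a common pointwise norming functional --- this is sound, and note $\alpha_i=\|f_i\|/\|f\|$ is measurable with $0\le\alpha_i\le1/\lambda_i$), decompose proportionally by $g_i=\alpha_i g$, enforce weak proximity by adjoining the multiplier densities $\alpha_i\Phi_{i,k}$, $\alpha_i\Phi_0$, $\Phi_{i,k}\mathbf{1}_{\{f=0\}}$ to the defining family of $V$ (legitimate: these are again essentially bounded $\ws$-measurable densities), extract the two mass-localisation estimates from a single norming slice at $f$, and repair the ball constraint by rescale-and-redistribute, which works exactly because $\sum_i\lambda_i\|g_i\|=\int\|g\|\,d\mu\le1$ and is the analogue of the paper's $\kappa/\rho$ trick. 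Because you never form cell averages, you never need uniformity: all comparisons of $g_i$ with $f_i$ go through linear functionals fixed in advance plus the nonnegative quantities $\|g\|-s$ and $\int_{\{f=0\}}\|g\|\,d\mu$. Consequently your argument invokes $w$UR \emph{only} to get strict convexity, and it bypasses H\'ajek's theorem by using the lifting representation of $L_1(\mu,X)^\ast$ by $\ws$-measurable densities (this needs $\mu$ complete and finite, so keep the paper's Proposition~1.2, or an equivalent-probability-measure change, for the $\sigma$-finite case). If every detail survives scrutiny --- in particular, fix $\eps_0$ \emph{last}, small against $\min_i\lambda_i$ and the margins $\eta_i$ of the basic neighbourhoods inside the $W_i$, since the excesses $\|g_i\|-1$ and the redistribution perturbations are of order $\eps_0(n+2)/\min_i\lambda_i$ --- then you would obtain the theorem for all strictly convex $X$, which is (a case of) the question the paper explicitly leaves open in its closing paragraph; given that, the alignment and redistribution steps deserve an especially careful write-up before claiming the stronger result.
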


The paper is organised as follows.

Section 2 contains auxiliary results
for the proof of Theorem \ref{thm: L_1(mu,X) has property CWO-S if X is wUR}
(Lemmata \ref{lemma: one may assume that the sets I_A and I_B are empty}--\ref{lemma: wUR}).
In particular, Lemma \ref{lemma: one may assume that the sets I_A and I_B are empty} 
fills a minor gap in the proof of \cite[Theorem~5.5]{ABHLP}.

In Section 3, we prove the following proposition
that reduces the proof of Theorem~\ref{thm: L_1(mu,X) has property CWO-S if X is wUR}
to the case when the measure $\mu$ is finite.

\begin{proposition}\label{prop: 1-sum of Banach spaces with property CWO-S has property CWO-S}
For every $i\in\N$, let $X_i$ be a Banach space with property \CWOS.
Then also the $1$-sum $\mathop{\bigoplus_1}\limits_{i=1}^\infty X_i$ has property \CWOS.
\end{proposition}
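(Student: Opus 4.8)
The plan is to build an explicit weak neighbourhood of $x$ inside $C$. Write $X=\bigoplus_1 X_i$, let $C=\sum_{k=1}^n\lambda_k W_k$ with each $W_k$ relatively weakly open in $B_X$, and let $x=\sum_k\lambda_k x_k\in C\cap S_X$ with $x_k\in W_k$. First I would extract the rigidity forced by $\|x\|=1$: since $1=\|x\|\le\sum_k\lambda_k\|x_k\|\le1$, every $\|x_k\|=1$, and writing $\|\cdot\|=\sum_i\|\cdot(i)\|$ and applying the triangle inequality in each coordinate gives $\|x(i)\|=\sum_k\lambda_k\|x_k(i)\|$ for all $i$. In particular each coordinate carries a common norming functional $x_i^\ast\in S_{X_i^\ast}$ with $x_i^\ast(x_k(i))=\|x_k(i)\|$ for every $k$; these functionals, assembled into $X^\ast=\bigoplus_\infty X_i^\ast$, are what I would use to control masses.

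Next I would reduce the bulk of the problem to finitely many coordinates. Fixing the finitely many functionals $f_{k,j}$ that define the $W_k$, choose $N$ so large that $\sum_{i>N}\|x(i)\|$ and each $\sum_{i>N}\|x_k(i)\|$ are negligible, and put $h_N:=(x_1^\ast,\dots,x_N^\ast,0,\dots)\in X^\ast$. The weak neighbourhood of $x$ I take is cut out by the $f_{k,j}$ together with $h_N(\,\cdot\,)>1-\delta$. For $y\in B_X$ in it, the chain $1-\delta<h_N(y)\le\sum_{i\le N}\|y(i)\|\le\|y\|\le1$ pins the head mass $M:=\sum_{i\le N}\|y(i)\|$ close to its value for $x$ and forces the tail mass $\|y\|-M$ to be small; crucially this is achieved by a weakly continuous constraint even though the norm itself is not weakly continuous.

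Granting that finite $\ell_1$-sums of spaces with property \CWOS again have property \CWOS --- which follows by induction from the two-summand case --- I would apply this to $Z_N:=\bigoplus_1^N X_i$. The head of $x$, rescaled to $S_{Z_N}$, is a convex combination of norm-one vectors with renormalised weights, so \CWOS of $Z_N$ makes it a relatively weakly interior point; hence for $y$ in my neighbourhood the rescaled head of $y$ decomposes as a like convex combination of vectors close to the rescaled heads of the $x_k$, which the coordinate rigidity of the first paragraph again forces to be norm-one. Scaling back produces head-pieces that reconstruct $y$ on $\{i\le N\}$ with norms close to $\sum_{i\le N}\|x_k(i)\|$. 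The small tail of $y$ I then distribute among the pieces by a water-filling rule $y_k(i)=c_{k,i}\,y(i)$ for $i>N$, assigning more tail to pieces with smaller head-norm; since the total available tail capacity equals $1-M$ and dominates $\|y\|-M$, this keeps every $\|y_k\|\le1$, while the $f_{k,j}$-constraints survive because the head is handled by the $Z_N$-step and each tail contributes only $\|f_{k,j}\|_\infty$ times a tiny mass. Then $y=\sum_k\lambda_k y_k\in C$.

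The step I expect to be genuinely hard is the two-summand base case $X\oplus_1 Y$ that feeds the induction, and there the whole difficulty is the coupled norm budget $\|u_k'\|+\|v_k'\|\le1$. Decomposing $u'$ and $v'$ by \CWOS in $X$ and in $Y$ separately forces the proportional piece-norms $\|u_k'\|=\frac{\|u'\|}{\|u\|}\|u_k\|$ and $\|v_k'\|=\frac{\|v'\|}{\|v\|}\|v_k\|$, whose sum can exceed $1$ exactly when the mass of $y$ overshoots in one summand --- and, unlike the infinite sum, there is no tail to serve as a reservoir. I expect this to be resolved by working on the face of $B_{X\oplus_1 Y}$ through $(u,v)$: a single norming functional $w^\ast=(u^\ast,v^\ast)$ pins both split masses to $\|u\|$ and $\|v\|$, and the overshoot is re-balanced by an additive adjustment along the (flat) face that stays inside the relatively weakly open sets. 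This balancing of the norm budget against weak --- as opposed to norm --- control of the perturbation is, to my mind, the technical heart of the proposition.
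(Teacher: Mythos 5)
Your reduction of the countable sum to a finite head plus a small tail is sound in outline: the aggregated functional $h_N$ pins the head mass $\sum_{i\le N}\|y(i)\|$ near $1$ (though, note, not the individual coordinate masses --- for the reduction that is harmless, since the $Z_N$-step absorbs any sloshing among head coordinates), the rescaled head of $y$ then lies in a weak neighbourhood of the rescaled head of $x$ where property \CWOS of $Z_N$ applies, the resulting pieces are norm-one by the triangle inequality, and your water-filling works because the total tail capacity $\sum_k\lambda_k\bigl(1-\|\text{head}_k\|\bigr)=1-\sum_{i\le N}\|y(i)\|$ dominates the tail mass of $y$. The genuine gap is that everything is funnelled into the two-summand case, which you do not prove: since $X\oplus_1 Y$ is itself a $1$-sum of the assumed form (take $X_i=\{0\}$ for $i\ge3$), your proposal reduces the proposition to a special instance of itself --- and, as you yourself observe, the hardest instance, since there the tail reservoir is absent. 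Worse, the mechanism you sketch for that case fails at its first step: a single norming functional $w^\ast=(u^\ast,v^\ast)\in S_{X^\ast\oplus_\infty Y^\ast}$ does \emph{not} pin the split masses. In $\R\oplus_1\R$ take $(u,v)=\bigl(\tfrac12,\tfrac12\bigr)$ and $w^\ast=(1,1)$; the point $(1,0)$ gives $w^\ast\bigl((1,0)\bigr)=1=w^\ast\bigl((u,v)\bigr)$, yet its first-summand mass is $1$, not $\tfrac12$.

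What does pin the masses --- and what the paper uses throughout --- is one norming functional \emph{per coordinate} with an individual tolerance: from $\zs_i(z_i)=\|z_i\|$ and $|\zs_i(w_i-z_i)|<\delta_i$ one gets $\|w_i\|>\|z_i\|-\delta_i$ for each $i$ separately, and combining these lower bounds with the global budget $\sum_i\|w_i\|\le1=\sum_i\|z_i\|$ yields $\sum_i\bigl|\|w_i\|-\|z_i\|\bigr|<d$. With masses pinned coordinatewise, the paper then carries out exactly the face-rebalancing you anticipated but did not construct: it decomposes each normalized head coordinate $w_i/\|w_i\|=\lambda_i a_i+\nu_i b_i$ by \CWOS of $X_i$, splits proportionally on the tail, and corrects the natural pieces by $+\nu c_i$ and $-\lambda c_i$ with $c_i=\frac{\kappa}{\rho}\|w_i\|b_i$ (or $\frac{\kappa}{\rho}\|w_i\|a_i$, depending on the sign of $\kappa$), moving along the segment between $a_i$ and $b_i$, on which the norm is additive because $\|\lambda_i a_i+\nu_i b_i\|=1$; here $\kappa=\sum_{i}\frac{\|y_i\|-\|x_i\|}{\|z_i\|}\eta_i$ aggregates the deviations $\eta_i=\|w_i\|-\|z_i\|$, and the computation $\|u\|=1+\sum_i\eta_i\le1$ closes the norm budget exactly. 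Note also that the paper needs no induction on the number of summands: it treats the countable sum in one pass, so your $Z_N$-reduction buys nothing beyond the direct argument, and the unproven base case is where all the content of the proposition lies.
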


Finally, in Section 4, we prove Theorem~\ref{thm: L_1(mu,X) has property CWO-S if X is wUR}.

\medskip
\textbf{A natural question} arises in connection with Theorem \ref{thm: L_1(mu,X) has property CWO-S if X is wUR}:
does this theorem remain true when $X$ is assumed to be any Banach space with property \CWOS{}
(note that strictly convex spaces---and thus, as a special case, wUR spaces---do have property \CWOS)
or, moreover, does the injective tensor product of two Banach spaces with property \CWOS{} have property \CWOS?

%\subsection*{\indent A natural question\nopunct} arises in

%%%%%%%%%%%%%%%%%%%%%%%%%%%%%%%%%%%%%%%%%%%%%%%%%%%%%%%%%%%%%%%%%%%%%%
\section{Auxiliary results for the proof of Theorem \ref{thm: L_1(mu,X) has property CWO-S if X is wUR}}
%%%%%%%%%%%%%%%%%%%%%%%%%%%%%%%%%%%%%%%%%%%%%%%%%%%%%%%%%%%%%%%%%%%%%%

In \cite[Theorem~5.5]{ABHLP} it was proven that,
for any non-zero $\sigma$-finite (countably additive non-negative) measure $\mu$,
the real space $L_1(\mu)$ has property $\CWOS$.
However, the proof in \cite{ABHLP} contains a gap: it is valid only for the case when,
using the notation of that proof, the set $J_1$ is non-empty.
Nevertheless, the following lemma
(that will also be used in the proof of Theorem \ref{thm: L_1(mu,X) has property CWO-S if X is wUR})
shows that, in the proof of \cite[Theorem~5.5]{ABHLP},
it actually suffices to consider only this very case,
and thus the result holds.

\begin{lemma}\label{lemma: one may assume that the sets I_A and I_B are empty}
Let $\mu$ be a non-zero (countably additive non-negative) measure
on a $\sigma$-algebra $\Sigma$ of subsets of a set $\Omega$,
and let $X$ be a Banach space.
If
\begin{itemize}
\item[$(\sharp)$]
whenever $x,y,z\in S_{L_1(\mu,X)}$ with
\[
\bigl\{\omega\in\Omega\colon\;
\text{$x(\omega)\not=0$ and $y(\omega)=0$, or $x(\omega)=0$ and $y(\omega)\not=0$}\bigr\}
=\emptyset
\]
and real numbers $\lambda,\nu\in(0,1)$ with $\lambda+\nu=1$
are such that $\lambda x+\nu y=z$ (in the space $L_1(\mu, X)$),
and $U$ and $V$ are neighbourhoods of, respectively, $x$~and~$y$
in the relative weak topology of $B_{L_1(\mu,X)}$,
there is a neighbourhood~$W$ of $z$
in the relative weak topology of $B_{L_1(\mu,X)}$ such that $W\subset \lambda U+\nu V$,
\end{itemize}
then the space $L_1(\mu, X)$ has property \CWOS.
\end{lemma}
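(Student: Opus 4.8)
The plan is to derive \CWOS{} from $(\sharp)$ in two stages: first prove the two-set case, reducing an arbitrary pair of functions to one with a common support (where $(\sharp)$ applies), and then pass to arbitrary convex combinations by induction on the number of sets. For the two-set case, let $W_1,W_2$ be relatively weakly open in $B_{L_1(\mu,X)}$ and let $z=\lambda x+\nu y\in(\lambda W_1+\nu W_2)\cap S_{L_1(\mu,X)}$ with $x\in W_1$, $y\in W_2$ and $\lambda,\nu\in(0,1)$, $\lambda+\nu=1$. From $1=\|z\|\le\lambda\|x\|+\nu\|y\|\le1$ I get $\|x\|=\|y\|=1$, and integrating $\|z(\omega)\|\le\lambda\|x(\omega)\|+\nu\|y(\omega)\|$ then yields the pointwise equality $\|z(\omega)\|=\lambda\|x(\omega)\|+\nu\|y(\omega)\|$ for almost every $\omega$. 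Now I interpolate towards $z$: for $t\in(0,1)$ put $x_t:=(1-t)z+tx$ and $y_t:=(1-t)z+ty$. Then $\lambda x_t+\nu y_t=z$ for all $t$, the same forcing gives $\|x_t\|=\|y_t\|=1$, and $\|x_t-x\|=(1-t)\|z-x\|\to0$, $\|y_t-y\|\to0$ as $t\to1$; since $W_1,W_2$ are relatively norm open in $B_{L_1(\mu,X)}$, for $t$ near $1$ we have $x_t\in W_1$ and $y_t\in W_2$.

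The gain from this interpolation is that $x_t$ and $y_t$ have the \emph{same} support for each $t\in(0,1)$, so that $(\sharp)$ becomes applicable. Off the common zero set of $x$ and $y$ (which is also where $z$ vanishes) I claim neither interpolant vanishes: where $x(\omega)\ne0=y(\omega)$ one has $z(\omega)=\lambda x(\omega)$, whence $x_t(\omega)=(\lambda+t\nu)x(\omega)\ne0$ and $y_t(\omega)=(1-t)\lambda x(\omega)\ne0$, and symmetrically where $y(\omega)\ne0=x(\omega)$; while on the set where both are nonzero, a vanishing $x_t(\omega)=0$ would force $z(\omega)=-\tfrac{t}{1-t}x(\omega)$ and hence $y(\omega)$ to be a negative multiple of $x(\omega)$, which is incompatible with the equality $\|z(\omega)\|=\lambda\|x(\omega)\|+\nu\|y(\omega)\|$. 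Thus $x_t,y_t\in S_{L_1(\mu,X)}$ meet the support hypothesis of $(\sharp)$. Fixing such a $t$ and choosing relatively weakly open neighbourhoods $\widetilde U\subset W_1$ of $x_t$ and $\widetilde V\subset W_2$ of $y_t$, condition $(\sharp)$ produces a relatively weakly open $W\ni z$ with $W\subset\lambda\widetilde U+\nu\widetilde V\subset\lambda W_1+\nu W_2$, so $z$ is an interior point.

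It remains to induct on the number $n$ of sets in $C=\sum_{i=1}^n\lambda_i W_i$ (all $\lambda_i>0$), the case $n=1$ being trivial and $n=2$ the case just treated. Given $z=\sum_i\lambda_i x_i\in C\cap S_{L_1(\mu,X)}$ with $x_i\in W_i$, write $z=\lambda_1 x_1+(1-\lambda_1)y$ with $y:=(1-\lambda_1)^{-1}\sum_{i\ge2}\lambda_i x_i\in C':=\sum_{i\ge2}\tfrac{\lambda_i}{1-\lambda_1}W_i$; the norm forcing gives $\|y\|=1$, so $y\in C'\cap S_{L_1(\mu,X)}$, and by the induction hypothesis $y$ is interior to $C'$, i.e.\ $y\in\widetilde V\subset C'$ for some relatively weakly open $\widetilde V$. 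Applying the two-set case to $z=\lambda_1 x_1+(1-\lambda_1)y$ with the weakly open sets $W_1\ni x_1$ and $\widetilde V\ni y$ gives a relatively weakly open $W\ni z$ with $W\subset\lambda_1 W_1+(1-\lambda_1)C'=C$, completing the induction. The step I expect to be the crux is the equal-support claim for the interpolants on the set where $x$ and $y$ are both nonzero: because $X$ is an arbitrary Banach space in this lemma (no strict convexity is available), one cannot invoke alignment of directions, and the non-vanishing of $x_t$ there has to be squeezed out of the almost-everywhere triangle equality alone; the remaining steps are routine manipulations with the weak topology and the norm-one forcing.
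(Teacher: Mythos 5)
Your proposal is correct and follows essentially the paper's own argument: your interpolants $x_t=(1-t)z+tx$ and $y_t=(1-t)z+ty$ are, with $\delta=1-t$, literally the paper's perturbations $(1-\nu\delta)a+\nu\delta b$ and $\lambda\delta a+(1-\lambda\delta)b$, and you exploit the a.e.\ pointwise triangle equality $\|z(\omega)\|=\lambda\|x(\omega)\|+\nu\|y(\omega)\|$ in the same way to get common supports; the only structural difference is that you prove the reduction from $n$ sets to two by induction using the norm-one forcing, where the paper simply cites \cite[Lemma~4.1, (b)]{ABHLP}. One small point you leave implicit: your case analysis only shows the exceptional set in $(\sharp)$ is $\mu$-null, whereas $(\sharp)$ demands it be empty, so you should add the paper's one-line step of redefining the representatives of $x_t$, $y_t$, and $z$ to be $0$ on that null set before invoking $(\sharp)$.
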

\begin{proof}
Assume that $(\sharp)$ holds,
and let relatively weakly open subsets $U$ and $V$ of $B_{L_1(\mu,X)}$,
real numbers $\lambda,\nu\in(0,1)$ with $\lambda+\nu=1$, and $z\in S_{L_1(\mu,X)}$
be such that $z\in\lambda U+\nu V$.
By \cite[Lemma~4.1, (b)]{ABHLP},
it suffices to find a neighbourhood $W$ of $z$ in the relative weak topology of $B_{L_1(\mu,X)}$
such that $W\subset\lambda U+\nu V$.

Let $a\in U$ and $b\in V$ be such that $z=\lambda a+\nu b$.
Since the sets $U$ and $V$ are relatively norm open in $B_{L_1(\mu, X)}$,
there is a real number $\delta$ with $0<\delta<1$ such that
\[
B(a,2\delta)\cap B_{L_1(\mu, X)}\subset U
\quad\text{and}\quad
B(b,2\delta)\cap B_{L_1(\mu, X)}\subset V.
\]
Defining
\[
x:=(1-\nu\delta)a+\nu\delta b=a+\nu\delta(b-a)
\]
and
\[
y:=\lambda\delta a+(1-\lambda\delta)b=b-\lambda\delta(b-a),
\]
one has
$x\in S_{L_1(\mu,X)}\cap B(a,2\delta)\subset U$ and $y\in S_{L_1(\mu,X)}\cap B(b,2\delta)\subset V$
with $\lambda x+\nu y=z$ in the space $L_1(\mu,X)$.
Indeed, in order to see that $x,y\in  S_{L_1(\mu,X)}$,
observe that, since $\|\lambda a+\nu b\|=\|z\|=\|\lambda a\|+\|\nu b\|$,
one has $\|\alpha a+\beta b\|=\alpha\|a\|+\beta\|b\|$ whenever $\alpha,\beta\geq0$.
By a similar argument, the set
\[
H:=\{\omega\in\Omega\colon \text{$x(\omega)\not=0$ and $y(\omega)=0$, or $x(\omega)=0$ and $y(\omega)\not=0$}\}
\]
is of $\mu$-measure zero.
(Here the key observation is that,
since $\|\lambda a(\omega)+\nu b(\omega)\|=\|\lambda a(\omega)\|+\|\nu b(\omega)\|$ for a.e. $\omega\in\Omega$,
one has $\|\alpha a(\omega)+\beta b(\omega)\|=\alpha\|a(\omega)\|+\beta\|b(\omega)\|$ whenever $\alpha,\beta\geq0$
for a.e. $\omega\in\Omega$.)
Redefining the values of $a$, $b$, $x$, $y$, and~$z$ on~$H$ to become $0$,
the redefined $a$, $b$, $x$, $y$, and~$z$ are the same functions in $L_1(\mu, X)$ as the original ones.
The existence of the desired~$W$ now follows from $(\sharp)$.
\end{proof}

The following two lemmata may be known; however, we do not know of any references for them.

\begin{lemma}\label{lemma: L_1(mu,X)}
Let $\mu$ be a non-zero finite (countably additive non-negative) measure
on a $\sigma$-algebra $\Sigma$ of subsets of a set $\Omega$, and let $X$ be a Banach space.
Let $z\in S_{L_1(\mu,X)}$, let $E\in\Sigma$, and let $0<\eps<1$.
\begin{itemize}
\item[\textup{(a)}]
If $z(\omega)\not=0$ for every $\omega\in E$,
then there are $n\in\N$ and pairwise disjoint measurable subsets $E_0,E_1,\dotsc,E_n$ of $E$
with $\bigcup_{i=0}^n E_i=E$ such that
\begin{itemize}
\item[\textup{(1)}]
$\mu(E_0)<\eps$;
\item[\textup{(2)}]
$\bigl\|\int_{E_i}z\,d\mu\bigr\|>(1-\eps)\int_{E_i}\|z\|\,d\mu$
for every $i\in\{1,\dotsc,n\}$ with $\mu(E_i)>0$;
\item[\textup{(3)}]
for every $i\in\{1,\dotsc,n\}$ with $\mu(E_i)>0$,
there is a neighbourhood~$W_i$ of $z$ in the relative weak topology of $B_{L_1(\mu,X)}$ such that
\begin{equation}\label{eq: int_(E_i) ||w||dmu > (1-eps)int_(E_i) ||z||dmu for every w in W_i}
\biggl\|\int_{E_i} w\,d\mu\biggr\|>(1-\eps)\int_{E_i}\|z\|\,d\mu
\quad\text{for every $w\in W_i$;}
\end{equation}
this neighbourhood can be chosen to be of the form
$W_i:=\bigl\{w\in B_{L_1(\mu,X)}\colon \bigl|\int_{E_i}\ws_i(w-z)\,d\mu\bigr|<\delta_i\bigr\}$
for some $\ws_i\in S_{\Xs}$ and $\delta_i>0$.
\end{itemize}
\item[\textup{(b)}]
There is a neighbourhood~$W$ of $z$ in the relative weak topology of $B_{L_1(\mu,X)}$ such that
\begin{equation}\label{eq: int_E ||z||dmu -eps < int_E ||w||dmu < int_E ||z||dmu +eps for every w in W}
\int_{E}\|z\|\,d\mu-\eps<\int_{E}\|w\|\,d\mu<\int_{E}\|z\|\,d\mu+\eps \quad\text{for every $w\in W$.}
\end{equation}
\end{itemize}
\end{lemma}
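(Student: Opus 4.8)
The plan is to prove the two parts separately: part~(a) by partitioning $E$ into finitely many measurable pieces on which the direction $z(\omega)/\norm{z(\omega)}$ is almost constant, and part~(b) by exploiting weak lower semicontinuity of the ``partial norm'' together with the normalisation $\norm{z}=1$. For part~(a) I would first fix a small auxiliary $\eta>0$ (to be calibrated against $\eps$ at the end) and consider the direction field $u(\omega):=z(\omega)/\norm{z(\omega)}$, which is well defined on all of $E$ because $z(\omega)\neq0$ there and which is strongly measurable since $z$ is. As $\mu$ is finite, Egorov's theorem applied to a sequence of simple functions converging a.e.\ to $u$ yields a set $E_0\subset E$ with $\mu(E_0)<\eps$ together with a single simple function $s$ satisfying $\norm{u(\omega)-s(\omega)}<\eta$ for all $\omega\in E\setminus E_0$. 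Taking the (finitely many, nonempty) level sets of $s$ inside $E\setminus E_0$ as $E_1,\dotsc,E_n$, on each $E_i$ the value $u(\omega)$ stays within $\eta$ of a fixed vector; normalising that vector to some $u_i\in S_X$ and choosing, by Hahn--Banach, a norming functional $\ws_i\in S_{\Xs}$ with $\ws_i(u_i)=1$, I obtain $\re\,\ws_i(u(\omega))>1-2\eta$, hence $\re\,\ws_i(z(\omega))>(1-2\eta)\norm{z(\omega)}$ on $E_i$.

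Integrating this pointwise estimate over $E_i$ and using $\norm{\int_{E_i}z\,d\mu}\geq\re\int_{E_i}\ws_i(z)\,d\mu$ gives~(2) as soon as $2\eta<\eps$. For~(3), the point is that $w\mapsto\int_{E_i}\ws_i(w)\,d\mu$ is a bounded linear functional on $L_1(\mu,X)$, being the pairing with $\mathbf 1_{E_i}\ws_i\in L_\infty(\mu,\Xs)$; consequently $W_i:=\set{w\in B_{L_1(\mu,X)}\colon\abs{\int_{E_i}\ws_i(w-z)\,d\mu}<\delta_i}$ is a relatively weakly open neighbourhood of $z$ of exactly the required form. For $w\in W_i$ one has $\norm{\int_{E_i}w\,d\mu}\geq\re\int_{E_i}\ws_i(w)\,d\mu>\int_{E_i}\re\,\ws_i(z)\,d\mu-\delta_i>(1-2\eta)\int_{E_i}\norm{z}\,d\mu-\delta_i$, so choosing $\delta_i<(\eps-2\eta)\int_{E_i}\norm{z}\,d\mu$ (which is positive since $\mu(E_i)>0$ and $z\neq0$ on $E_i$) yields \eqref{eq: int_(E_i) ||w||dmu > (1-eps)int_(E_i) ||z||dmu for every w in W_i}.

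For part~(b), the key observation is that for every $F\in\Sigma$ the map $w\mapsto\int_F\norm{w}\,d\mu=\norm{\mathbf 1_F\,w}_{L_1(\mu,X)}$ is weakly lower semicontinuous, being the composition of the weak-to-weak continuous multiplication operator $w\mapsto\mathbf 1_F\,w$ with the (weakly lower semicontinuous) norm of $L_1(\mu,X)$. Lower semicontinuity at $z$ with $F=E$ supplies a weak neighbourhood on which $\int_E\norm{w}\,d\mu>\int_E\norm{z}\,d\mu-\eps$, which is the left-hand inequality. For the right-hand one I would apply the same statement with $F=\Omega\setminus E$ to get a neighbourhood on which $\int_{\Omega\setminus E}\norm{w}\,d\mu>\int_{\Omega\setminus E}\norm{z}\,d\mu-\eps$; combining this with $\int_E\norm{w}\,d\mu\leq 1-\int_{\Omega\setminus E}\norm{w}\,d\mu$ and with the identity $1-\int_{\Omega\setminus E}\norm{z}\,d\mu=\int_E\norm{z}\,d\mu$ (valid because $\norm{z}=1$) gives $\int_E\norm{w}\,d\mu<\int_E\norm{z}\,d\mu+\eps$. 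Intersecting the two neighbourhoods proves \eqref{eq: int_E ||z||dmu -eps < int_E ||w||dmu < int_E ||z||dmu +eps for every w in W}.

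I expect the main obstacle to be the partition step in part~(a): one must produce a genuinely measurable finite partition on which the direction field is uniformly almost constant, and this is exactly where strong measurability of $z$ (its essential separable-valuedness) and Egorov's theorem are indispensable; once the pieces are in hand, everything reduces to a Hahn--Banach norming estimate. Part~(b), by contrast, is short once one notices the complement trick that converts the delicate upper bound into a lower-semicontinuity statement on $\Omega\setminus E$, so I do not anticipate difficulty there.
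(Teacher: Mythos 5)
Your proof is correct, but both halves take a genuinely different route from the paper's. In part (a) the paper does not normalise: it first fixes $\sigma>0$ with $\mu(E\setminus D)<\frac{\eps}{2}$ for $D:=\{\omega\colon\|z(\omega)\|>\sigma\}$, uses essential separable-valuedness to split $D$ (minus a null set) into countably many measurable pieces with $\diam z(E_i)<\frac{\eps\sigma}{2}$, and truncates to finitely many of them, dumping the tail into $E_0$; the cutoff at $\sigma$ is exactly what turns the absolute oscillation bound on $z$ into the relative bound $\eps\sigma\leq\eps\|z(\omega)\|$ needed for (2). Your Egorov argument applied to the direction field $u=z/\|z\|$ is scale-invariant and so dispenses with the cutoff altogether --- a genuine simplification --- at the inessential cost of invoking Egorov instead of truncating a countable small-diameter partition. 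For (3) the paper chooses $\ws_i$ norming the vector $\int_{E_i}z\,d\mu$ itself and takes $\delta_i$ to be the exact gap $\bigl\|\int_{E_i}z\,d\mu\bigr\|-(1-\eps)\int_{E_i}\|z\|\,d\mu>0$ supplied by (2), whereas you reuse the direction-norming functional from (2) and calibrate $\delta_i<(\eps-2\eta)\int_{E_i}\|z\|\,d\mu$; both give the required form of $W_i$, and your positivity claim $\int_{E_i}\|z\|\,d\mu>0$ is sound since $z\neq0$ pointwise on $E_i$ and $\mu(E_i)>0$. In part (b) your complement trick for the upper estimate is identical to the paper's, but for the lower estimate the paper bootstraps part (a): it covers $\{\omega\in E\colon z(\omega)\neq0\}$ up to a set of small measure (chosen by absolute continuity of $\int\|z\|\,d\mu$) by pieces $E_i$ and combines $\int_{E_i}\|w\|\,d\mu\geq\bigl\|\int_{E_i}w\,d\mu\bigr\|$ with the neighbourhoods $W_i$, thereby producing an explicit neighbourhood generated by the finitely many functionals $\chi_{E_i}\ws_i$. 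Your alternative --- weak lower semicontinuity of the norm-continuous seminorm $w\mapsto\int_E\|w\|\,d\mu$, via Mazur applied to its convex norm-closed sublevel sets --- is shorter and non-constructive; since the lemma, and its application in Section 4, only require \emph{some} relatively weakly open neighbourhood (no prescribed form in (b)), this abstraction loses nothing.
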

\begin{proof}
(a).
Assume that $z(\omega)\not=0$ for every $\omega\in E$.
Choose a real number $\sigma>0$ so that $\mu(E\setminus D)<\frac{\eps}{2}$
where $D:=\{\omega\in\Omega\colon \|z(\omega)\|>\sigma\}$.
Since the function $z$ is essentially separably-valued,
there is a subset $N$ of $D$ such that $\mu(N)=0$ and the set $z(D\setminus N)$ is separable.
It follows that there are pairwise disjoint measurable subsets $E_i$ of $E$, $i=1,2,\dotsc$,
such that $\diam z(E_i)<\frac{\eps\sigma}{2}$ for every $i\in\N$,
and $D\setminus N=\bigcup_{i=1}^\infty E_i$.
Pick an $n\in\N$ so that $\mu(C)<\frac{\eps}{2}$ where $C:=\bigcup_{i=n+1}^\infty E_i$,
and set $E_0:=(E\setminus D)\cup N\cup C$.

Fix an $i\in\{1,\dotsc,n\}$ and suppose that $\mu(E_i)>0$.
Letting $z_i\in z(E_i)$ be arbitrary and picking a $\zs_i\in S_{\Xs}$ so that $\re\zs_i(z_i)=\|z_i\|$,
one has
\begin{equation*}%\label{eq: ||int_(E_i) z dmu|| > (1-eps) int_(E_i) ||z|| dmu}
\begin{aligned}
\biggl\|\int_{E_i} z\,d\mu\biggr\|
&\geq\re\zs_i\biggl(\int_{E_i} z\,d\mu\biggr)
=\int_{E_i}\bigl(\re\zs_i(z_i)+\re\zs_i(z-z_i)\bigr)\,d\mu\\
&\geq\int_{E_i}(\|z_i\|-\|z-z_i\|)\,d\mu
\geq\int_{E_i}(\|z\|-2\|z-z_i\|)\,d\mu\\
&>\int_{E_i}(\|z\|-\eps\sigma)\,d\mu
\geq\int_{E_i}(\|z\|-\eps\|z\|)\,d\mu
=(1-\eps)\int_{E_i}\|z\|\,d\mu.
\end{aligned}
\end{equation*}
Now pick a $\ws_i\in S_{\Xs}$ satisfying $\re\ws_i\bigl(\int_{E_i}z\,d\mu\bigr)=\bigl\|\int_{E_i}z\,d\mu\bigr\|$
and set $W_i:=\bigl\{w\in B_{L_1(\mu,X)}\colon \bigl|\int_{E_i}\ws_i(w-z)\,d\mu\bigr|<\delta_i\bigr\}$
where $\delta_i:=\bigl\|\int_{E_i}z\,d\mu\bigr\|-(1-\eps)\int_{E_i}\|z\|\,d\mu>0$.
For every $w\in W_i$, one has
\begin{align*}
\biggl\|\int_{E_i}w\,d\mu\biggr\|
&\geq\re\ws_i\biggl(\int_{E_i}w\,d\mu\biggr)
=\re\ws_i\biggl(\int_{E_i}z\,d\mu\biggr)+\re\biggl(\int_{E_i}\ws_i(w-z)\,d\mu\biggr)\\
&\geq\biggl\|\int_{E_i}z\,d\mu\biggr\|-\biggl|\int_{E_i}\ws_i(w-z)\,d\mu\biggr|\\
&>\biggl\|\int_{E_i}z\,d\mu\biggr\|-\delta_i
=(1-\eps)\int_{E_i}\|z\|\,d\mu.
\end{align*}

(b).
We first find a neighbourhood $W'$ of $z$ in the relative weak topology of~$B_{L_1(\mu,X)}$
such that the first inequality in \eqref{eq: int_E ||z||dmu -eps < int_E ||w||dmu < int_E ||z||dmu +eps for every w in W}
holds for every $w\in W'$.
If $\int_E \|z\|\,d\mu=0$, then this inequality holds for every $w\in B_{L_1(\mu,X)}$.
Suppose that $\int_E \|z\|\,d\mu>0$.
Define $D:=\bigl\{\omega\in E\colon z(\omega)\not=0\bigr\}$ and let $\delta>0$ be such that
$\int_A\|z\|\,d\mu<\frac{\eps}{2}$ whenever $A\in\Sigma$ satisfies $\mu(A)<\delta$.
By part (a) of the lemma, there are $n\in\N$
and pairwise disjoint measurable subsets $E_0,E_1,\dotsc,E_n$ of~$D$
with $\bigcup_{i=0}^n E_i=D$ such that $\mu(E_0)<\delta$ and, for every $i\in\{1,\dotsc,n\}$ with $\mu(E_i)>0$,
there is a neighbourhood~$W_i$ of $z$ in the relative weak topology of $B_{L_1(\mu,X)}$
satisfying \eqref{eq: int_(E_i) ||w||dmu > (1-eps)int_(E_i) ||z||dmu for every w in W_i}
with $\eps$ replaced by $\frac{\eps}{2}$.
For every $i\in\{1,\dotsc,n\}$ with $\mu(E_i)=0$, set $W_i:=B_{L_1(\mu,X)}$,
and define $W':=\bigcap_{i=1}^n W_i$.
For every $w\in W'$, one has, writing $B:=\bigcup_{i=1}^n E_i$,
\begin{align*}
\int_E \|w\|\,d\mu
&\geq\int_B \|w\|\,d\mu
=\sum_{i=1}^n\int_{E_i}\|w\|\,d\mu
\geq\biggl(1-\frac{\eps}{2}\biggr)\sum_{i=1}^n\int_{E_i}\|z\|\,d\mu\\
&=\biggl(1-\frac{\eps}{2}\biggr)\int_B\|z\|\,d\mu
=\int_B\|z\|\,d\mu-\frac{\eps}{2}\int_B\|z\|\,d\mu\\
&\geq\int_B\|z\|\,d\mu-\frac{\eps}{2}
=\int_E\|z\|\,d\mu-\int_{E_0}\|z\|\,d\mu-\frac{\eps}{2}\\
&>\int_E\|z\|\,d\mu-\frac{\eps}{2}-\frac{\eps}{2}
=\int_E\|z\|\,d\mu-\eps
\end{align*}
(note that $\int_{E_0}\|z\|\,d\mu<\frac{\eps}{2}$
because $\mu(E_0)<\delta$).

By what we just proved, there is a neighbourhood $W''$ of $z$ in the relative weak topology of~$B_{L_1(\mu,X)}$
such that $\int_{\Omega\setminus E} \|w\|\,d\mu>\int_{\Omega\setminus E}\|z\|\,d\mu-\eps$ for every $w\in W''$.
For every $w\in W''$, one has
\begin{align*}
\int_E \|w\|\,d\mu
&\leq1-\int_{\Omega\setminus E} \|w\|\,d\mu
<1-\biggl(\int_{\Omega\setminus E}\|z\|\,d\mu-\eps\biggr)
=\int_E \|z\|\,d\mu+\eps.
\end{align*}
It remains to define $W:=W'\cap W''$.
\end{proof}

\begin{lemma}\label{lemma: wUR}
Let $X$ be a $w$UR Banach space, let $\xs\in\Xs$, and let $0<\xi\leq\frac{1}{2}$ and $\eps>0$.
Then there is a real number $\theta\in(0,1)$ such that,
whenever $a,b\in B_X$ and real numbers $\lambda$ and $\nu$ with $\min\{\lambda,\nu\}\geq\xi$ and $\lambda+\nu=1$
satisfy $\|\lambda a+\nu b\|>\theta$, one has $|\xs(a-b)|<\eps$.
\end{lemma}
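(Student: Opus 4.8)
The plan is to reduce the statement to the definition of weak uniform rotundity, which controls only the midpoints $\tfrac12(x+y)$ of points $x,y\in S_X$, whereas here we are handed an arbitrary convex combination $\lambda a+\nu b$ of points of the ball $B_X$ with weights bounded below by $\xi$. We may assume $\xs\neq0$, the case $\xs=0$ being trivial. Applying the $w$UR property to $\xs$ and $\eps/2$ produces a $\delta>0$ such that $x,y\in S_X$ and $\|\tfrac12(x+y)\|>1-\delta$ imply $|\xs(x-y)|<\eps/2$; the number $\theta$ will be fixed only at the very end, close enough to $1$ to drive all the estimates below.

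First I would observe that the hypothesis $\theta<\|\lambda a+\nu b\|\le\lambda\|a\|+\nu\|b\|$ together with $\|a\|,\|b\|\le1$ and $\min\{\lambda,\nu\}\ge\xi$ forces both $1-\|a\|$ and $1-\|b\|$ to be smaller than $\rho:=(1-\theta)/\xi$; once $\theta>1-\xi$, the points $a,b$ are nonzero. Setting $x:=a/\|a\|$ and $y:=b/\|b\|$, one has $x,y\in S_X$ together with $\|a-x\|=\bigl|1-\|a\|\bigr|<\rho$ and $\|b-y\|<\rho$, so that by the triangle inequality $s:=\|\lambda x+\nu y\|>\theta-\rho$.

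The hard part---the step I expect to be the main obstacle---is to deduce that $\tfrac12(x+y)$ is close to the sphere from the mere fact that the off-centre combination $\lambda x+\nu y$ has norm close to $1$, since this is not automatic when the weights are unequal. Assuming by the symmetry in $a,b$ (which preserves $|\xs(a-b)|$ and $\min\{\lambda,\nu\}$) that $\lambda=\min\{\lambda,\nu\}$, I would write $\lambda(x+y)=(\lambda x+\nu y)-(\nu-\lambda)y$ and apply the triangle inequality to obtain $\lambda\|x+y\|\ge s-(\nu-\lambda)=s-1+2\lambda$, whence $\|\tfrac12(x+y)\|\ge1-\tfrac{1-s}{2\lambda}\ge1-\tfrac{1-s}{2\xi}$. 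Since $1-s<1-\theta+\rho$, this makes $\|\tfrac12(x+y)\|$ as close to $1$ as we please by taking $\theta$ near $1$.

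Finally, choosing $\theta$ large enough that $\|\tfrac12(x+y)\|>1-\delta$, the $w$UR property gives $|\xs(x-y)|<\eps/2$, and then $|\xs(a-b)|\le|\xs(x-y)|+\|\xs\|\bigl(\|a-x\|+\|b-y\|\bigr)<\eps/2+2\|\xs\|\rho$. Imposing the further smallness condition $1-\theta<\eps\xi/(4\|\xs\|)$ makes this last bound smaller than $\eps$. It therefore remains to pick $\theta\in(0,1)$ simultaneously satisfying $\theta>1-\xi$, the bound forcing $\|\tfrac12(x+y)\|>1-\delta$, and $1-\theta<\eps\xi/(4\|\xs\|)$, which is possible by taking $\theta$ close enough to $1$; this completes the argument.
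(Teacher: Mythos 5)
Your argument is correct --- each step checks out (the bound $1-\|a\|,1-\|b\|<(1-\theta)/\xi$ from $\theta<\lambda\|a\|+\nu\|b\|$, the triangle-inequality estimate $s:=\|\lambda x+\nu y\|>\theta-\rho$, the midpoint bound, and the final assembly are all valid, and the conditions on $\theta$ are non-circular since $\rho$ is an explicit function of $\theta$) --- but it takes a genuinely different route from the paper's. The crux in both proofs is extracting a near-unit midpoint from an off-centre combination, and you do it differently. You normalise \emph{both} endpoints, which is legitimate precisely because the two-sided hypothesis $\min\{\lambda,\nu\}\geq\xi$ forces $\|a\|$ and $\|b\|$ both close to $1$, and then pass from $\|\lambda x+\nu y\|$ to $\bigl\|\tfrac12(x+y)\bigr\|$ via the identity $\lambda(x+y)=(\lambda x+\nu y)-(\nu-\lambda)y$, paying a factor $\tfrac{1}{2\xi}$. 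The paper instead avoids any such degradation: it sets $c:=\lambda a+\nu b$, notes that the convex function $t\mapsto\|(1-t)a+tb\|$ must satisfy $\|(1-t)a+tb\|\geq\|c\|$ on at least one side of $t=\nu$ (a dichotomy, with WLOG the side of $a$), and from this deduces that the midpoint of $x:=a/\|a\|$ and $y:=c/\|c\|$ already has norm $\geq\|c\|>\theta$, applying $w$UR to that pair; the weight bound enters only at the very end via $\xi|\xs(a-b)|\leq\nu|\xs(a-b)|=|\xs(a-c)|$. What each buys: your proof is more elementary and transparent (no convexity dichotomy, only the harmless WLOG $\lambda\leq\nu$), and it applies $w$UR directly to $a/\|a\|$ and $b/\|b\|$, so $\xs(x-y)$ approximates $\xs(a-b)$ without an extra identity; the paper's comparison point $c/\|c\|$ yields a lossless midpoint bound, so its $\theta$ can be taken just above $\max\{\theta_0,1-\eps_0\}$, whereas your choice needs $1-\theta$ of order $\delta\xi^2$ --- a quantitative difference that is irrelevant here, since the lemma asserts only the existence of some $\theta\in(0,1)$.
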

\begin{proof}
We may and will assume that $\|\xs\|\leq1$.
Pick a real number $\eps_0>0$ so that $\frac{3\eps_0}{\xi}<\eps$.
Since $X$ is $w$UR, there is a real number $\theta_0$ with $0<\theta_0<1$
such that $|\xs(x-y)|<\eps_0$ whenever $x,y\in S_X$ satisfy $\bigl\|\frac{1}{2}(x+y)\bigr\|>\theta_0$.
Pick a real number $\theta$ so that $\max\{\theta_0,1-\nobreak\eps_0\}<\theta<1$.
Now suppose that $a,b\in B_X$ and real numbers $\lambda$ and $\nu$ with $\min\{\lambda,\nu\}\geq\xi$ and $\lambda+\nu=1$
satisfy $\|\lambda a+\nu b\|>\theta$. Write $c:=\lambda a+\nu b$.
Observe that at least one of the following assertions must hold:
\begin{enumerate}
\item[\textup{(1)}]
$\|(1-t)a+tb\|\geq\|c\|$ whenever $0\leq t<\nu$;
\item[\textup{(2)}]
$\|(1-t)a+tb\|\geq\|c\|$ whenever $\nu<t\leq1$.
\end{enumerate}
By symmetry, one may, without loss of generality, assume that \textup{(1)} holds.
(Note that~\textup{(2)} is equivalent to the condition that $\|(1-s)b+sa\|\geq\|c\|$ whenever $0\leq s<\lambda$.)
Defining $x:=\frac{a}{\|a\|}$ and $y:=\frac{c}{\|c\|}$,
one has  $x,y\in S_X$ with $\bigl\|\frac{1}{2}(x+y)\bigr\|\geq\|c\|>\theta>\theta_0$,
thus $|\xs(x-y)|<\eps_0$ by the choice of $\theta_0$.
It follows that
\begin{align*}
\xi|\xs(a-b)|
&\leq\nu|\xs(a-b)|
=|\xs(a-c)|
\leq\|a-x\|+|\xs(x-y)|+\|y-c\|\\
&=1-\|a\|+|\xs(x-y)|+1-\|c\|
<1-\theta +\eps_0 + 1-\theta
<3\eps_0
\end{align*}
whence $|\xs(a-b)|<\frac{3\eps_0}{\xi}<\eps$, as desired.
\end{proof}

%%%%%%%%%%%%%%%%%%%%%%%%%%%%%%%%%%%%%%%%%%%%%%%%%%%%%%%%%%%%%%%%%%%%%%
\section{Proof of  Proposition~\ref{prop: 1-sum of Banach spaces with property CWO-S has property CWO-S}}
%%%%%%%%%%%%%%%%%%%%%%%%%%%%%%%%%%%%%%%%%%%%%%%%%%%%%%%%%%%%%%%%%%%%%%

\begin{proof}[Proof of  Proposition~\ref{prop: 1-sum of Banach spaces with property CWO-S has property CWO-S}]
We adjust the arguments from the proofs
of \cite[Theorem~2.3]{HKP} and \cite[Theorem 5.5]{ABHLP}.
Set $Z:=\mathop{\bigoplus_1}\limits_{i=1}^\infty X_i$,
and let $x,y,z\in S_Z$ and real numbers $\lambda,\nu\in(0,1)$ with $\lambda+\nu=1$
be such that $z = \lambda x + \nu y$.
Let $F$ be a finite subset of~$S_{\Zs}$ and let $0<\eps<1$. Define
\[
U:=\bigl\{u\in B_Z\colon \text{$|f(u-x)|<6\eps$ for every $f\in F$}\big\}
\]
and
\[
V:=\bigl\{v\in B_Z\colon \text{$|f(v-y)|<6\eps$ for every $f\in F$}\big\}.
\]
By \cite[Lemma~4.1, (b)]{ABHLP},
it suffices to find a neighbourhood $W$ of $z$ in the relative weak topology of $B_Z$
such that $W\subset \lambda U + \nu V$.

For every $f\in F$, there are $f_i\in {X_i}^\ast$, $i=1,2,\dotsc$,
with $\sup_{i\in\N}\|f_i\|=\|f\|=1$ such that
\[
f(w)=\sum_{i=1}^\infty f_i(w_i)
\qquad\text{for every $w=(w_i)_{i=1}^\infty\in Z$.}
\]
Let $x=(x_i)_{i=1}^\infty$, $y=(y_i)_{i=1}^\infty$, $z=(z_i)_{i=1}^\infty$ where $x_i,y_i,z_i\in X_i$.
Observe that $\|z_i\|=\lambda\|x_i\|+\nu\|y_i\|$ for every $i\in\N$.
Define
\[
I_0:=\{i\in\N\colon x_i=y_i=0\}
\qquad\text{and}\qquad
I^+:=\{i\in\N\colon \text{$x_i\not=0$ and $y_i\not=0$}\}
\]
(note that $z_i=0$ for every $i\in I_0$,
and $z_i\not=0$ for every $i\in I^+$ because $\|z_i\|=\lambda\|x_i\|+\nu\|y_i\|$ for every $i\in\N$).
By an argument similar to that in the proof of Lemma \ref{lemma: one may assume that the sets I_A and I_B are empty},
we may (and we will) assume that $I_0\cup I^+=\N$.

Pick an $N\in\N$ such that
\[
\sum_{i=N+1}^\infty\|x_i\|<\eps\min\{\lambda,\nu\}
\qquad\text{and}\qquad
\sum_{i=N+1}^\infty\|y_i\|<\eps\min\{\lambda,\nu\}
\]
(and thus also $\sum_{i=N+1}^\infty\|z_i\|<\eps\min\{\lambda,\nu\}$),
and define
\[
I:=\{1,\dotsc,N\}\setminus I_0
\qquad\text{and}\qquad
I_1:=\{N+1,N+2,\dotsc\}\setminus I_0.
\]
Note that $I^+=I\cup I_1$.

For every $i\in I$, defining
\[
\widehat{x}_i:=\frac{x_i}{\|x_i\|},
\quad
\widehat{y}_i:=\frac{y_i}{\|y_i\|},
\quad
\widehat{z}_i:=\frac{z_i}{\|z_i\|},
\quad
\lambda_i:=\frac{\lambda\|x_i\|}{\|z_i\|},
\quad
\nu_i:=\frac{\nu\|y_i\|}{\|z_i\|},
\]
one has $\lambda_i+\nu_i=1$ and
\[
\widehat{z}_i
=\frac{z_i}{\|z_i\|}
=\frac{\lambda}{\|z_i\|}\,x_i+\frac{\nu}{\|z_i\|}\,y_i
=\frac{\lambda\|x_i\|}{\|z_i\|}\,\widehat{x}_i+\frac{\nu\|y_i\|}{\|z_i\|}\,\widehat{y}_i
=\lambda_i\widehat{x}_i+\nu_i\widehat{y}_i.
\]
For every $i\in I$, defining
\[
U_i:=\bigl\{a\in B_{X_i}\colon \text{$|f_i(a-\widehat{x}_i)|<\eps$ for every $f\in F$}\big\}
\]
and
\[
V_i:=\bigl\{b\in B_{X_i}\colon \text{$|f_i(b-\widehat{y}_i)|<\eps$ for every $f\in F$}\big\},
\]
the sets $U_i$ and $V_i$ are neighbourhoods of, respectively, $\widehat{x}_i$ and $\widehat{y}_i$
in the relative weak topology of $B_{X_i}$,
thus, since the space $X_i$ has property \CWOS{},
there is a neighbourhood $W_i$ of $\widehat{z}_i$ in the relative weak topology of $B_{X_i}$
such that $W_i\subset\lambda_i U_i+\nu_i V_i$.
For every $i\in I$,
choose a finite subset $F_i$ of $B_{X_i^\ast}$
and a real number $d_i>0$ such that $C_i\subset W_i$ where
\[
C_i:=\{c\in B_{X_i}\colon \text{$|\xs(c-\widehat{z}_i)|<d_i$ for every $\xs\in F_i$}\}.
\]

Pick a real number $d>0$ so that $d<\eps\min\{\lambda,\nu\}$.
One may assume that, setting $\Gamma :=\sum_{i\in I}\|z_i\|$,
one has $d<\frac{\Gamma}{2}$,
and $\max\bigl\{\frac{1}{\lambda},\frac{1}{\nu}\bigr\}\cdot\frac{2d}{\Gamma}
     <\min\Bigl\{\frac{\|x_i\|}{\|z_i\|},\frac{\|y_i\|}{\|z_i\|}\Bigr\}$,
$d<\eps\|z_i\|$, and $2d<d_i\|z_i\|$ for every $i\in I$.

There is a neighbourhood $W$ of $z$
in the relative weak topology of $B_Z$ such that,
for every $w=(w_i)_{i=1}^\infty\in W$, one has
\begin{itemize}
\item[(1)]
$\sum\limits_{i=1}^\infty\bigl|\|w_i\|-\|z_i\|\bigr|<d$;
\item[(2)]
$\dfrac{w_i}{\|w_i\|}\in W_i$ for every $i\in I$;
\item[(3)]
$\sum\limits_{i\in I}\|w_i\|>1-2\eps\min\{\lambda,\nu\}$;
\item[(4)]
$\biggl|\dfrac{\|w_i\|}{\|z_i\|}-1\biggr|<\eps$ for every $i\in I$.
\end{itemize}
Indeed, choose an $m\in\N$ with $m\geq N$ so that $\sum_{i=m+1}^\infty\|z_i\|<\frac{d}{6}$.
For every $i\in\{1,\dotsc,m\}$, pick a $\zs_i\in S_{{X_i}^\ast}$ such that $\zs_i(z_i)=\|z_i\|$,
and set $\delta_i:=\frac{d}{6\cdot2^i}$.
Define
\begin{align*}
W:=\bigl\{w=(w_i)_{i=1}^\infty\in B_Z\colon
&\text{$|\zs_i(w_i-z_i)|<\delta_i$ for every $i\in\{1,\dotsc,m\}$, and}\\
&\text{$|\xs(w_i-z_i)|<d$ for every $i\in I$ and every $\xs\in F_i$}\bigr\}.
\end{align*}
Let $w=(w_i)_{i=1}^\infty\in W$ be arbitrary.
Since, for every $i\in\{1,\dotsc,m\}$,
\begin{equation}\label{eq: ||w_i||>=||z_i||-delta_i}
\|w_i\|\geq|\zs_i(w_i)|\geq|\zs_i(z_i)|-|\zs_i(z_i-w_i)|>\|z_i\|-\delta_i,
\end{equation}
one has
\begin{equation*}%\label{eq: sum_(i=m+1)^infty||w_i||<(d)/3}
\begin{aligned}
\sum_{i=m+1}^\infty\|w_i\|
&=\|w\|-\sum_{i=1}^m\|w_i\|
\leq1-\sum_{i=1}^m(\|z_i\|-\delta_i)
=\sum_{i=m+1}^\infty\|z_i\|+\sum_{i=1}^m \delta_i\\
&<\frac{d}{6}+\frac{d}{6}=\frac{d}{3}.
\end{aligned}
\end{equation*}
Define $J^+:=\bigl\{i\in\{1,\dotsc,m\}\colon \|w_i\|\geq\|z_i\|\bigr\}$
and $J^-:=\{1,\dotsc,m\}\setminus J^+$.
Write $\eta_i:=\|w_i\|-\|z_i\|$ for every $i\in\N$.
From \eqref{eq: ||w_i||>=||z_i||-delta_i},
one has $\sum_{i\in J^-}|\eta_i|<\sum_{i\in J^-}\delta_i<\frac{d}{6}$,
thus
\begin{align*}
1&\geq\sum_{i=1}^m\|w_i\|
=\sum_{i=1}^m(\|z_i\|+\eta_i)
=\sum_{i=1}^m\|z_i\|-\sum_{i\in J^-}|\eta_i|+\sum_{i\in J^+}|\eta_i|\\
&>1-\frac{d}{6}-\frac{d}{6}+\sum_{i\in J^+}|\eta_i|
\end{align*}
whence $\sum_{i\in J^+}|\eta_i|<\frac{d}{3}$.
It follows that
\begin{align*}
\sum_{i=1}^\infty|\eta_i|
&=\sum_{i\in J^+}|\eta_i|
  +\sum_{i\in J^-}|\eta_i|
  +\sum_{i=m+1}^\infty\bigl|\|w_i\|-\|z_i\|\bigr|
<\frac{d}{3}+\frac{d}{6}+\sum_{i=m+1}^\infty\bigl(\|w_i\|+\|z_i\|\bigr)\\
&<\frac{d}{3}+\frac{d}{6}+\frac{d}{3}+\frac{d}{6}
=d.
\end{align*}
This establishes (1).
From \eqref{eq: ||w_i||>=||z_i||-delta_i}, it follows that
\begin{align*}
\sum_{i\in I}\|w_i\|
&>\sum_{i\in I}\|z_i\|-\sum_{i\in I}\delta_i
=\sum_{i=1}^N\|z_i\|-\sum_{i\in I}\delta_i
>1-\eps\min\{\lambda,\nu\}-d\\
&>1-2\eps\min\{\lambda,\nu\},
\end{align*}
thus (3) holds.
Now let $i\in I$.
From (1), it follows that $\bigl|\|w_i\|-\|z_i\|\bigr|<d<\eps\|z_i\|$.
This implies (4).
For every $\xs\in F_i$, using (1), one has
\begin{align*}
\biggl|\xs\biggl(\frac{w_i}{\|w_i\|}-\widehat{z}_i\biggr)\biggr|
&=\biggl|\xs\biggl(\frac{w_i}{\|w_i\|}-\frac{z_i}{\|z_i\|}\biggr)\biggr|
\leq\frac{|\xs(w_i-z_i)|}{\|z_i\|}+\biggl|\xs\biggl(\frac{w_i}{\|w_i\|}-\frac{w_i}{\|z_i\|}\biggr)\biggr|\\
&<\frac{d}{\|z_i\|}+\biggl|\frac{1}{\|w_i\|}-\frac{1}{\|z_i\|}\biggr|\|w_i\|
=\frac{d}{\|z_i\|}+\frac{\bigl|\|w_i\|-\|z_i\|\bigr|}{\|z_i\|}\\
&<\frac{d}{\|z_i\|}+\frac{d}{\|z_i\|}
=\frac{2d}{\|z_i\|}
<d_i,
\end{align*}
thus $\frac{w_i}{\|w_i\|}\in W_i$.
This establishes (2).

For every $i\in I$, since $\frac{w_i}{\|w_i\|}\in W_i$ by (2),
there are $a_i\in U_i$ and $b_i\in V_i$ such that $\frac{w_i}{\|w_i\|}=\lambda_i a_i+\nu_i b_i$.

Defining $\eta_i:=\|w_i\|-\|z_i\|$ for every $i\in\N$,
one has $\sum_{i=1}^\infty|\eta_i|<d$ by (1).
Thus, setting,
\[
\kappa:=\sum_{i\in I^+}\frac{\|y_i\|-\|x_i\|}{\|z_i\|}\,\eta_i,
\]
one has
\[
|\kappa|
\leq\max\biggl\{\frac{1}{\lambda},\frac{1}{\nu}\biggr\}\sum_{i\in I^+}|\eta_i|
<\max\biggl\{\frac{1}{\lambda},\frac{1}{\nu}\biggr\}\,d<\eps.
\]
Observe that
\begin{align*}
\rho
:=\sum_{i\in I}\|w_i\|
=\sum_{i\in I}\bigl(\|z_i\|+\eta_i\bigr)
=\Gamma+\sum_{i\in I}\eta_i
\geq\Gamma-\sum_{i\in I}|\eta_i|
>\Gamma-d
>\frac{\Gamma}{2}
\end{align*}
and thus, for every $i\in I$,
\begin{align*}
\frac{|\kappa|}{\rho}
\leq \frac{2|\kappa|}{\Gamma}
< \max\biggl\{\frac{1}{\lambda},\frac{1}{\nu}\biggr\}\cdot\frac{2d}{\Gamma}
<\min\biggl\{\frac{\|x_i\|}{\|z_i\|},\frac{\|y_i\|}{\|z_i\|}\biggr\}.
\end{align*}

Define $u :=(u_i)_{i=1}^\infty$ and $v :=(v_i)_{i=1}^\infty$
where $u_i := v_i := w_i$ if $i\in I_0$, and, for $i\in I^+$,
  \begin{equation*}
    \left\{
      \begin{aligned}
        u_i&:= \frac{\|x_i\|}{\|z_i\|} w_i,\\
        v_i&:= \frac{\|y_i\|}{\|z_i\|} w_i
      \end{aligned}
    \right.
    \quad\text{if $i\in I_1$,}
    \quad\text{and}\quad
    \left\{
      \begin{aligned}
        u_i&:=
        \frac{\|x_i\|}{\|z_i\|}\,\|w_i\|a_i + \nu c_i \\
        v_i&:=
        \frac{\|y_i\|}{\|z_i\|}\,\|w_i\|b_i - \lambda c_i
      \end{aligned}
    \right.
    \quad\text{if $i\in I$.}
  \end{equation*}
where
\[
c_i=\frac{\kappa}{\rho}\,\|w_i\|b_i
\quad\text{if $\kappa\geq0$,}
\qquad\text{and}\qquad
c_i=\frac{\kappa}{\rho}\,\|w_i\|a_i
\quad\text{if $\kappa<0$}.
\]
For every $i\in I$,
observing that  $\|\alpha a_i+\beta b_i\|=\alpha\|a_i\|+\beta\|b_i\|=\alpha+\beta$ whenever $\alpha,\beta\geq0$
(this is because $\|\lambda_i a_i+\nu_i b_i\|=1=\|\lambda_i a_i\|+\|\nu_i b_i\|$), one has
\[
\|u_i\|=\biggl(\frac{\|x_i\|}{\|z_i\|}+\nu\,\frac{\kappa}{\rho}\biggr)\|w_i\|
\qquad\text{and}\qquad
\|v_i\|=\biggl(\frac{\|y_i\|}{\|z_i\|}-\lambda\,\frac{\kappa}{\rho}\biggr)\|w_i\|.
\]
For every $i\in\N$,
one has $\lambda u_i + \nu v_i = w_i$;
thus $\lambda u + \nu v = w$.
It remains to show that $u\in U$ and $v\in V$.

One has
\begin{align*}
\|u\|
&=\sum_{i\in I_0}\|w_i\|+\sum_{i \in I_1} \frac{\|x_i\|}{\|z_i\|} \|w_i\|
         +\sum_{i \in I} \left(\frac{\|x_i\|}{\|z_i\|}
         + \nu\,\frac{
          \kappa}{\rho}\right) \|w_i\| \\
&=\sum_{i\in I_0}\|w_i\|+\sum_{i\in I^+}\frac{\|x_i\|}{\|z_i\|} \|w_i\| + \nu\kappa \\
&=\sum_{i\in I_0}\eta_i+\sum_{i\in I^+}\frac{\|x_i\|}{\|z_i\|}(\|z_i\| + \eta_i)
         +\nu\sum_{i\in I^+}\frac{\|y_i\|-\|x_i\|}{\|z_i\|}\,\eta_i\\
&=\sum_{i\in I_0}\eta_i+\sum_{i\in I^+}\|x_i\|
         +\sum_{i\in I^+}\frac{\lambda\|x_i\|+\nu\|y_i\|}{\|z_i\|}\,\eta_i
=1 + \sum_{i=1}^\infty\eta_i\\
&\leq1
\end{align*}
because
\[
\sum_{i=1}^\infty\eta_i
=\sum_{i=1}^\infty\|w_i\|-\sum_{i=1}^\infty\|z_i\|=\|w\|-\|z\|=\|w\|-1\leq0.
\]
By symmetry, also $\|v\|\leq1$.

Now let $f\in F$ be arbitrary. One has
\begin{align*}
|f(u-x)|
&=\biggl|\sum_{i=1}^\infty f_i(u_i-x_i)\biggr|
\leq\sum_{i\in I_0\cup I_1}(\|u_i\|+\|x_i\|)+\sum_{i\in I}|f_i(u_i-x_i)|.
\end{align*}
Since $w\in W$ with $w=\lambda u+\nu v$, where $\|u\|\leq1$ and $\|v\|\leq1$, by (3),
\[
1-2\eps\lambda
<\sum_{i\in I}\|w_i\|
\leq\lambda\sum_{i\in I}\|u_i\|+\nu\sum_{i\in I}\|v_i\|
\leq\lambda\sum_{i\in I}\|u_i\|+\nu,
\]
whence
\[
\lambda\sum_{i\in I}\|u_i\|
>1-2\eps\lambda-\nu
=\lambda-2\eps\lambda;
\]
thus $\sum_{i\in I}\|u_i\|>1-2\eps$
and hence
\[
\sum_{i\in I_0\cup I_1}\|u_i\|
\leq1-\sum_{i\in I}\|u_i\|
<1-(1-2\eps)
=2\eps.
\]
Thus, keeping in mind that $\sum_{i\in I_0\cup I_1}\|x_i\|=\sum_{i=N+1}^\infty\|x_i\|<\eps$, one has
\[
\sum_{i\in I_0\cup I_1}(\|u_i\|+\|x_i\|)
=\sum_{i\in I_0\cup I_1}\|u_i\|+\sum_{i\in I_0\cup I_1}\|x_i\|
<2\eps+\eps=3\eps.
\]
For every $i\in I$,
\begin{align*}
u_i-x_i
&=\|x_i\|\biggl(\frac{\|w_i\|}{\|z_i\|}\,a_i-\widehat{x}_i\biggr)+\nu c_i
=\|x_i\|\left(\biggl(\frac{\|w_i\|}{\|z_i\|}-1\biggr)a_i+(a_i-\widehat{x}_i)\right)+\nu c_i,
\end{align*}
thus
\begin{align*}
\sum_{i\in I}|f_i(u_i-x_i)|
&\leq\sum_{i\in I}\left(\|x_i\|\biggl(\biggl|\frac{\|w_i\|}{\|z_i\|}-1\biggr||f_i(a_i)|
                                       +|f_i(a_i-\widehat{x}_i)|\biggr)
                        +\nu \|c_i\|\right)\\
&=\sum_{i\in I}\biggl(\|x_i\|(\eps+\eps)+\nu\,\frac{|\kappa|}{\rho}\,\|w_i\|\biggr)
=2\eps\sum_{i\in I}\|x_i\|+\nu\,\frac{|\kappa|}{\rho}\sum_{i\in I}\|w_i\|\\
&<3\eps.
\end{align*}
It follows that $|f(u-x)|<6\eps$; thus $u\in U$.
By symmetry, also $v\in V$,
and the proof is complete.
\end{proof}

%%%%%%%%%%%%%%%%%%%%%%%%%%%%%%%%%%%%%%%%%%%%%%%%%%%%%%%%%%%%%%%%%%%%%%
\section{Proof of  Theorem~\ref{thm: L_1(mu,X) has property CWO-S if X is wUR}}
%%%%%%%%%%%%%%%%%%%%%%%%%%%%%%%%%%%%%%%%%%%%%%%%%%%%%%%%%%%%%%%%%%%%%%

\begin{proof}[Proof of  Theorem~\ref{thm: L_1(mu,X) has property CWO-S if X is wUR}]
By Proposition \ref{prop: 1-sum of Banach spaces with property CWO-S has property CWO-S},
one may assume that the measure $\mu$ is finite.

Let $U$ and $V$ be relatively weakly open subsets of $B_{L_1(\mu,X)}$,
and let $x \in U$, $y \in V$, and $\lambda, \nu>0$ with $\lambda + \nu = 1$
be such that $\|\lambda x + \nu y \|=1$.
By \cite[Lemma~4.1, (b)]{ABHLP}, it suffices to find
a neighbourhood~$W$ of $z := \lambda x + \nu y$ in the relative weak topology of $B_{L_1(\mu,X)}$
such that $W\subset \lambda U + \nu V$.
By Lemma~\ref{lemma: one may assume that the sets I_A and I_B are empty},
one may assume that, defining $C:=\bigl\{\omega\in\Omega\colon \text{$x(\omega)\not=0$ and $y(\omega)\not=0$}\bigr\}$,
one has $\Omega=C\cup\bigl\{\omega\in\Omega\colon \text{$x(\omega)=y(\omega)=0$}\bigr\}$.

Pick an $m\in\N$ so that $\frac{2}{\lambda}\leq m$ and $\frac{2}{\nu}\leq m$.
Identify functionals in $L_1(\mu,X)^\ast$ with functions in $L_\infty(\mu,\Xs)$ in the canonical way (this can be done by  \cite[page 98, Theorem 1]{DU} because the dual space $\Xs$ has the Radon--Nikod\'ym property by a result of H\'ajek \cite[Theorem 1]{H}).
Keeping in mind that each function in $L_\infty(\mu,\Xs)$ is essentially separably valued, there are pairwise disjoint sets $E_j\in\Sigma$ and finite subsets $G_j$ of $B_{\Xs}$, $j=1,2,\dotsc$,
and an $\eps>0$ with $\eps<1$ such that $\bigcup_{j=1}^\infty E_j=\Omega$ and, defining a seminorm $p$ on $L_1(\mu,X)$ by
\[
p(w):=\sum_{j=1}^\infty\max_{\xs\in G_j}\biggl|\int_{E_j}\xs(w)\,d\mu\biggr|,
\]
one has $U'\subset U$ and $V'\subset V$ where
$U':=\bigl\{u \in B_{L_1(\mu,X)}\colon p(u-x)<(m+4)\eps\bigr\}$
and $V':=\bigl\{v\in B_{L_1(\mu,X)}\colon p(v-y)<(m+4)\eps\bigr\}$.

Let a real number $\delta>0$ be such that $\int_E\|x\|\,d\mu<\eps$ and $\int_E\|y\|\,d\mu<\eps$
(and thus also $\int_E\|z\|\,d\mu<\eps$) whenever $E\in\Sigma$ satisfies $\mu(E)<3\delta$.

%Define
%\begin{gather*}
%A:=\bigl\{\omega\in\Omega\colon \text{$x(\omega)\not=0$ and $y(\omega)=0$}\bigr\},\quad
%B:=\bigl\{\omega\in\Omega\colon \text{$x(\omega)=0$ and $y(\omega)\not=0$}\bigr\},\\
%C:=\bigl\{\omega\in\Omega\colon \text{$x(\omega)\not=0$ and $y(\omega)\not=0$}\bigr\}.
%%D&:=\bigl\{\omega\in\Omega\colon \text{$x(\omega)=y(\omega)=0$}\bigr\},
%\end{gather*}

Pick an $N\in\N$ and real numbers $\sigma,\tau>0$
so that $\mu\Bigl(\bigcup_{j=N+1}^\infty E_j\Bigr)<\delta$
and
%$\mu(A\setminus A')<\delta$, $\mu(B\setminus B')<\delta$,
$\mu(C\setminus C')<\delta$ where
\begin{gather*}
%A':=\bigl\{\omega\in A\colon \sigma<\|x(\omega)\|\leq\tau \bigr\}, \quad
%B':=\bigl\{\omega\in B\colon \sigma<\|y(\omega)\|\leq\tau \bigr\},\\
C':=\bigl\{\omega\in C\colon \text{$\sigma<\|\phi(\omega)\|\leq\tau$ for every $\phi\in\{x,y,z\}$}\bigr\}.
\end{gather*}
Write $G:=\bigcup_{j=1}^N G_j$
and let $\theta\in(0,1)$ be a real number
satisfying the conclusions of Lemma~\ref{lemma: wUR}
with $\xi:=\min\bigl\{\lambda\,\frac{\sigma}{\tau},\nu\,\frac{\sigma}{\tau}\bigr\}$ for every $\xs\in G$.

For every $j\in\{1,\dotsc,N\}$, by applying Lemma \ref{lemma: L_1(mu,X)}, (a),
with $E=E_j\cap C'$ and $\eps=\min\bigl\{1-\theta,\frac{\delta}{N}\bigr\}$,
one obtains $n_j\in\{0\}\cup\N$
and pairwise disjoint measurable subsets $E^j_0,E^j_1,\dotsc,E^j_{n_j}$ of $E_j\cap C'$
with $\bigcup_{k=0}^{n_j}E^j_k=E_j\cap C'$ such that $\mu(E^j_0)<\frac{\delta}{N}$,
and $\mu(E^j_k)>0$ and $\bigl\|\int_{E^j_k}z\,d\mu\bigr\|>\theta\int_{E^j_k}\|z\|\,d\mu$
for every $k\in\{1,\dotsc,n_j\}$.
Relabel the sets $E^j_1,\dotsc,E^j_{n_j}$, $j=1,\dotsc,N$,
to become $D_2,\dotsc,D_n$ where $n=n_1+\dotsb+n_N+1$.
Set $D:=\bigcup_{i=2}^n D_i$, and $D_0:=\bigl\{\omega\in\Omega\setminus D\colon z(\omega)=0\bigr\}$
and $D_1:=\bigl\{\omega\in\Omega\setminus D\colon z(\omega)\not=0\bigr\}$.
Observe that $\int_{D_0}\|x\|\,d\mu=0$ and $\int_{D_0}\|y\|\,d\mu=0$
(because otherwise one would have $\|z\|<\lambda\|x\|+\nu\|y\|=1$).
Since $\mu(D_1)<3\delta$,
one has $\int_{D_1}\|x\|\,d\mu<\eps$ and $\int_{D_1}\|y\|\,d\mu<\eps$,
and thus also $\int_{D_1}\|z\|\,d\mu<\eps$.
Since $\int_{D_0\cup D_1}\|z\|\,d\mu=\int_{D_1}\|z\|\,d\mu<\eps<1$,
it follows that, indeed, the collection $\{D_2,\dotsc,D_n\}$ is non-empty.

Observe that, for every $w\in L_1(\mu,X)$, one has
$p(w)\leq q(w)+\int_{D_0\cup D_1}\|w\|\,d\mu$ where
\[
q(w):=\sum_{i=2}^n\max_{\xs\in G}\biggl|\int_{D_i}\xs(w)\,d\mu\biggr|.
\]
Since $\int_{D_0\cup D_1}\|x\|\,d\mu<\eps$ and $\int_{D_0\cup D_1}\|y\|\,d\mu<\eps$, one has
\begin{equation}\label{eq: p(u-x)=<q(u-x)+int_(D_0)||u||dmu+eps}
p(u-x) \leq q(u-x) + \int_{D_0\cup D_1}\|u\|\,d\mu + \eps
\quad\text{for every $u\in L_1(\mu,X)$}
\end{equation}
and $p(v-y) \leq q(y-y) + \int_{D_0\cup D_1}\|v\|\,d\mu + \eps$ for every $v\in L_1(\mu,X)$.

For every $i\in \{0,1,\dotsc,n\}$, set
\begin{equation*}
a_i := \int_{D_i}x\,d\mu,
\quad
b_i := \int_{D_i}y\,d\mu,
\quad\text{and}\quad
c_i := \int_{D_i}z\,d\mu
\end{equation*}
and
\begin{equation*}
\alpha_i := \int_{D_i}\|x\|\,d\mu,
\quad
\beta_i := \int_{D_i}\|y\|\,d\mu,
\quad\text{and}\quad
\gamma_i := \int_{D_i}\|z\|\,d\mu.
\end{equation*}
Notice that $\lambda a_i + \nu b_i = c_i$ and $\lambda\alpha_i + \nu \beta_i = \gamma_i$
(the latter is because if $\lambda\alpha_i + \nu \beta_i > \gamma_i$ for some $i\in \{0,1,\dotsc,n\}$,
then one would have $\|z\|<\lambda\|x\|+\nu\|y\|=1$).
Notice also that $\frac{\alpha_i}{\gamma_i}\geq\frac{\sigma}{\tau}$
and $\frac{\beta_i}{\gamma_i}\geq\frac{\sigma}{\tau}$ for every $i\in \{2,\dotsc,n\}$
(this is because $\alpha_i\geq\sigma\mu(D_i)$ and $\beta_i\geq\sigma\mu(D_i)$,
and $\gamma_i\leq\tau\mu(D_i)$ for every $i\in \{2,\dotsc,n\}$).
One also has $\bigl\|\frac{c_i}{\gamma_i}\bigr\|>\theta$ for every $i\in \{2,\dotsc,n\}$.

Whenever $i\in \{2,\dotsc,n\}$, observing that
\[
\lambda\frac{\alpha_i}{\gamma_i}\,\frac{a_i}{\alpha_i}
+\nu\frac{\beta_i}{\gamma_i}\,\frac{b_i}{\beta_i}
=\frac{c_i}{\gamma_i}
\]
where $\lambda\frac{\alpha_i}{\gamma_i}+\nu\frac{\beta_i}{\gamma_i}=1$
with $\min\bigl\{\lambda\frac{\alpha_i}{\gamma_i},\nu\frac{\beta_i}{\gamma_i}\bigr\}\geq\xi$,
one has $\bigl|\xs\bigl(\frac{a_i}{\alpha_i}-\frac{c_i}{\gamma_i}\bigr)\bigr|<\eps$
and $\bigl|\xs\bigl(\frac{b_i}{\beta_i}-\frac{c_i}{\gamma_i}\bigr)\bigr|<\eps$ for every $\xs\in G$ by the choice of $\theta$,
and thus $\bigl|\xs\bigl(a_i-\frac{\alpha_i}{\gamma_i}\,c_i\bigr)\bigr|<\eps\alpha_i$
and $\bigl|\xs\bigl(b_i-\frac{\beta_i}{\gamma_i}\,c_i\bigr)\bigr|<\eps\beta_i$ for every $\xs\in G$.

Define $I^+:=\bigl\{i\in\{1,\dotsc,n\}\colon \gamma_i>0\bigr\}$
and $I^0:=\{0,1,\dotsc,n\}\setminus I^+$
(that is, $I^0:=\{0,1\}$ and $I^+:=\{2,\dotsc,n\}$ if $\mu(D_1)=0$,
and $I^0:=\{0\}$ and $I^+:=\{1,\dotsc,n\}$ if $\mu(D_1)>0$).

Pick a real number $d>0$ so that $\max\bigl\{\frac{1}{\lambda},\frac{1}{\nu}\bigr\}\,d<\eps$.
One may assume that, setting $\Gamma :=\sum_{i=2}^n\gamma_i$,
one has $d<\frac{\Gamma}{2}$
and $\max\bigl\{\frac{1}{\lambda},\frac{1}{\nu}\bigr\}\cdot\frac{2d}{\Gamma}<\frac{\sigma}{\tau}$.
By Lemma \ref{lemma: L_1(mu,X)}, (b), there is a neighbourhood $W_0$ of $z$
in the relative weak topology of $B_{L_1(\mu, X)}$ such that, whenever $w\in W_0$ and $i\in\{0,1,\dotsc,n\}$,
\begin{equation}\label{eq: gamma_i - d/(n+1) < int_(D_i)||w|dmu| < gamma_i + d/(n+1)}
\gamma_i-\frac{d}{n+1}
<\int_{D_i}\|w\|\,d\mu
<\gamma_i+\frac{d}{n+1}.
\end{equation}
Define
\begin{equation*}
W:=
W_0\cap\bigcap_{i=2}^n\biggl\{w\in B_{L_1(\mu,X)}\colon
\max_{\xs\in G}\biggl|\int_{D_i}\xs(w-z)\,d\mu \biggr|<\eps\gamma_i\biggr\}.
\end{equation*}
Let $w\in W$ be arbitrary.

For every $i\in\{0,1,\dotsc,n\}$, set $w_i=\chi_{D_i}w$ and $\eta_i:=\|w_i\|-\gamma_i$.
By \eqref{eq: gamma_i - d/(n+1) < int_(D_i)||w|dmu| < gamma_i + d/(n+1)},
one has $\sum_{i=0}^n|\eta_i|<d$.
Thus, setting,
\[
\kappa:=\sum_{i\in I^+}\frac{\beta_i-\alpha_i}{\gamma_i}\,\eta_i,
\]
one has
\[
|\kappa|
\leq\max\biggl\{\frac{1}{\lambda},\frac{1}{\nu}\biggr\}\sum_{i\in I^+}|\eta_i|
<\max\biggl\{\frac{1}{\lambda},\frac{1}{\nu}\biggr\}\,d<\eps.
\]
Observe that
\begin{align*}
\rho
:=\sum_{i=2}^n\|w_i\|
=\sum_{i=2}^n\bigl(\gamma_i+\eta_i\bigr)
=\Gamma+\sum_{i=2}^n\eta_i
\geq\Gamma-\sum_{i=2}^n|\eta_i|
>\Gamma-d
>\frac{\Gamma}{2}
\end{align*}
and thus, for every $i\in\{2,\dotsc,n\}$,
\begin{align*}
\frac{|\kappa|}{\rho}
\leq \frac{2|\kappa|}{\Gamma}
< \max\biggl\{\frac{1}{\lambda},\frac{1}{\nu}\biggr\}\cdot\frac{2d}{\Gamma}
<\frac{\sigma}{\tau}
\leq\min\biggl\{\frac{\alpha_i}{\gamma_i},\frac{\beta_i}{\gamma_i}\biggr\}.
\end{align*}

Define $u := \sum_{i=0}^n u_i$ and $v := \sum_{i=0}^n v_i$
where $u_i := v_i := w_i$ if $i\in I^0$, and, for $i\in I^+$,
  \begin{equation*}
    \left\{
      \begin{aligned}
        u_i&:= \frac{\alpha_i}{\gamma_i} w_i,\\
        v_i&:= \frac{\beta_i}{\gamma_i} w_i
      \end{aligned}
    \right.
    \quad\text{if $i=1$,}
    \quad\text{and}\quad
    \left\{
      \begin{aligned}
        u_i&:=
        \left(\frac{\alpha_i}{\gamma_i} + \nu\,\frac{\kappa}{\rho}\right) w_i, \\
        v_i&:=
        \left(\frac{\beta_i}{\gamma_i} - \lambda\,\frac{\kappa}{\rho}\right) w_i
      \end{aligned}
    \right.
    \quad\text{if $i\in\{2,\dotsc,n\}$.}
  \end{equation*}
For every $i\in \{0,1,\dotsc,n\}$,
one has $\lambda u_i + \nu v_i = w_i$;
thus $\lambda u + \nu v = \sum_{i=0}^n w_i = w$.
It remains to show that $u\in U$ and $v\in V$.

One has
\begin{align*}
\|u\|
&=\|u_0\|+\|u_1\| +\sum_{i=2}^n \left(\frac{\alpha_i}{\gamma_i} + \nu\,\frac{\kappa}{\rho}\right)\|w_i\| \\
&=\sum_{i\in I^0}\|w_i\|+\sum_{i\in I^+}\frac{\alpha_i}{\gamma_i} \|w_i\| + \nu\kappa \\
&=\sum_{i\in I^0}\eta_i+\sum_{i\in I^+}\frac{\alpha_i}{\gamma_i}(\gamma_i + \eta_i)
         +\nu\sum_{i\in I^+}\frac{\beta_i-\alpha_i}{\gamma_i}\,\eta_i\\
&=\sum_{i\in I^0}\eta_i+\sum_{i\in I^+}\alpha_i
         +\sum_{i\in I^+}\frac{\lambda\alpha_i+\nu\beta_i}{\gamma_i}\,\eta_i
=1 + \sum_{i=0}^n\eta_i\\
&\leq1
\end{align*}
because
\[
\sum_{i=0}^n\eta_i
=\sum_{i=0}^n\|w_i\|-\sum_{i=0}^n\gamma_i=\|w\|-\|z\|=\|w\|-1\leq0.
\]
By symmetry, also $\|v\|\leq1$.

Now let $\xs\in G$ be arbitrary.
For every $i\in\{2,\dotsc,n\}$, one has
\begin{align*}
\biggl|\int_{D_i}\xs\biggl(\frac{\alpha_i}{\gamma_i}\,w_i-x\biggr)\,d\mu\biggr|
&=\biggl|\int_{D_i}\xs\biggl(\frac{\alpha_i}{\gamma_i}\,w_i
                               -\frac{\alpha_i}{\gamma_i}\,z+\frac{\alpha_i}{\gamma_i}\,z-x\biggr)\,d\mu\biggr|\\
&\leq\frac{\alpha_i}{\gamma_i}\biggl|\int_{D_i}\xs(w-z)\,d\mu\biggr|
      +\biggl|\xs\biggl(\int_{D_i}\biggl(\frac{\alpha_i}{\gamma_i}z-x\biggr)\,d\mu\biggr)\biggr|\\
%&\leq\frac{\alpha_i}{\gamma_i}\,\eps\gamma_i
%      \roosavalem{+\biggl\|\int_{D_i}\biggl(\frac{\alpha_i}{\gamma_i}z-x\biggr)\,d\mu\biggr\|}
%      \hallvalem{+\biggl|\xs\biggl(\frac{\alpha_i}{\gamma_i}\,c_i-a_i\biggr)\biggr|}\\
&<\eps\alpha_i +\biggl|\xs\biggl(\frac{\alpha_i}{\gamma_i}\,c_i-a_i\biggr)\biggr|\\
&<2\eps\alpha_i
\end{align*}
and thus
\begin{align*}
\biggl|\int_{D_i}\xs(u-x)\,d\mu\biggr|
&=\biggl|\int_{D_i}\xs\biggl(\frac{\alpha_i}{\gamma_i}\,w_i+\nu\,\frac{\kappa}{\rho}\,w_i-x\biggr)\,d\mu\biggr|\\
&\leq\biggl|\int_{D_i}\xs\biggl(\frac{\alpha_i}{\gamma_i}\,w_i-x\biggr)\,d\mu\biggr|
    +\frac{|\kappa|}{\rho}\int_{D_i}\|w_i\|\,d\mu\\
%&<\eps\alpha_i\roosavalem{+\biggl\|\frac{\alpha_i}{\gamma_i}\,c_i-a_i\biggr\|}
%              \hallvalem{+\biggl|\xs\biggl(\frac{\alpha_i}{\gamma_i}\,c_i-a_i\biggr)\biggr|}
%    +\eps\,\frac{\|w_i\|}{\rho}\\
&<2\eps\alpha_i+\eps\,\frac{\|w_i\|}{\rho}.
\end{align*}
It follows that
\begin{align*}
q(u-x)
%&=\sum_{i=2}^n\max_{\xs\in G}\biggl|\int_{D_i}\xs(u-x)\biggr|\\
%&=\sum_{i\in I_A}\max_{\xs\in G}\biggl|\int_{D_i}\xs(u-x)\biggr|
%+\sum_{i\in I_C}\max_{\xs\in G}\biggl|\int_{D_i}\xs(u-x)\biggr|\\
&<\sum_{i=2}^n\biggl(2\eps\alpha_i+\eps\,\frac{\|w_i\|}{\rho}\biggr)
\leq2\eps\sum_{i=2}^n\alpha_i+\frac{\eps}{\rho}\sum_{i=2}^n\|w_i\|
\leq3\eps.
\end{align*}
Since $w\in W_0$ with $w=\lambda u+\nu v$, where $\|u\|\leq1$ and $\|v\|\leq1$,
one has (recall that $D=\bigcup_{i=2}^n D_i$)
\begin{align*}
1-2\eps
&<1-\int_{D_0\cup D_1}\|z\|\,d\mu-\eps
=\int_{D}\|z\|\,d\mu-\eps\\
&<\int_{D}\|w\|\,d\mu
\leq\lambda\int_{D}\|u\|\,d\mu+\nu\int_{D}\|v\|\,d\mu
\leq\lambda\int_{D}\|u\|\,d\mu+\nu
\end{align*}
whence
\begin{align*}
\lambda\int_{D}\|u\|\,d\mu
>1-2\eps-\nu=\lambda-2\eps
\end{align*}
and thus $\int_{D}\|u\|\,d\mu>1-\frac{2\eps}{\lambda}\geq1-m\eps$; hence
\begin{align*}
\int_{D_0\cup D_1}\|u\|\,d\mu
&=\|u\|-\int_{D}\|u\|\,d\mu
<1-(1-m\eps)
=m\eps.
\end{align*}
By \eqref{eq: p(u-x)=<q(u-x)+int_(D_0)||u||dmu+eps},
it follows that $p(u-x)<3\eps+m\eps+\eps=(m+4)\eps$;
thus $u\in U'\subset U$.
By symmetry, also $v \in V$,
and the proof is complete.
\end{proof}

% ----------------------------------------------------------------
%\bibliographystyle{amsplain}
\bibliographystyle{amsplain_abbrv}
\bibliography{references}

\end{document}